\renewcommand{\le}{\leqslant}
\renewcommand{\ge}{\geqslant}
\renewcommand{\gamma}{\upgamma}
\newcommand{\ud}[0]{\,\mathrm{d}}
\renewcommand{\pi}{\uppi}
\newcommand{\dd}{\mathsf{d}}
\newcommand{\ee}{\mathsf{e}}
\newcommand{\n}{\{1,\ldots,n\}}
\newcommand{\f}{\upphi}
\renewcommand{\d}{\updelta}
\newcommand{\e}{\varepsilon}
\newcommand{\R}{\mathbb R}
\newcommand{\1}{\mathbf 1}
\newtheorem{theorem}{Theorem}
\newtheorem{corollary}[theorem]{Corollary}
\theoremstyle{remark}
\newtheorem{remark}[theorem]{Remark}
\newtheorem{conjecture}[theorem]{Conjecture}
\renewcommand{\tau}{\uptau}
\newcommand{\cM}{\mathcal{M}}
\newcommand{\cN}{\mathcal{N}}
\renewcommand{\xi}{\upxi}
\renewcommand{\rho}{\uprho}
\renewcommand{\subset}{\subseteq}
\newcommand{\C}{\mathbb C}
\newcommand{\N}{\mathbb N}
\newcommand{\eqdef}{\stackrel{\mathrm{def}}{=}}
\renewcommand{\theta}{\uptheta}
\renewcommand{\lambda}{\uplambda}
\DeclareMathOperator{\diam}{diam}
\newcommand{\A}{\mathsf{A}}
\newcommand{\G}{\mathsf{G}}
\renewcommand{\gamma}{\upgamma}
\renewcommand{\beta}{\upbeta}
\renewcommand{\alpha}{\upalpha}
\renewcommand{\kappa}{\upkappa}
\renewcommand{\psi}{\uppsi}
\renewcommand{\rho}{\uprho}
\renewcommand{\delta}{\updelta}
\renewcommand{\pi}{\uppi}
\renewcommand{\omega}{\upomega}
\renewcommand{\sigma}{\upsigma}
\renewcommand{\A}{\mathsf{A}}
\renewcommand{\eta}{\upeta}
\renewcommand{\kappa}{\upkappa}
\renewcommand{\mu}{\upmu}
\renewcommand{\nu}{\upnu}
\renewcommand{\pi}{\uppi}
\renewcommand{\zeta}{\upzeta}
\begin{document}

\title{A spectral gap precludes low-dimensional embeddings}

\author{Assaf Naor}
\address{Mathematics Department\\ Princeton University\\ Fine Hall, Washington Road, Princeton, NJ 08544-1000, USA}
\email{naor@math.princeton.edu}

\thanks{Supported by BSF grant 2010021, the Packard Foundation and the Simons Foundation.  The research presented here was conducted under the auspices of the Simons Algorithms and Geometry (A\&G) Think Tank.}

\keywords{Metric embeddings, dimensionality reduction, expander graphs, nonlinear spectral gaps, nearest neighbor search, complex interpolation, Markov type.}

\date{\today}
\maketitle

\vspace{-0.25in}

\begin{abstract} We prove that there is a universal constant $C>0$ with the following property. Suppose that $n\in \N$ and that $\A=(a_{ij})\in M_n(\R)$ is a  symmetric stochastic matrix. Denote  the second-largest eigenvalue of $\A$ by $\lambda_2(\A)$. Then for {\em any} finite-dimensional normed space $(X,\|\cdot\|)$ we have
$$
\forall\, x_1,\ldots,x_n\in X,\qquad \dim(X)\ge \frac12 \exp\left(C\frac{1-\lambda_2(\A)}{\sqrt{n}}\bigg(\frac{\sum_{i=1}^n\sum_{j=1}^n\|x_i-x_j\|^2}{\sum_{i=1}^n\sum_{j=1}^na_{ij}\|x_i-x_j\|^2}\bigg)^{\frac12}\right).
$$
This implies that if an $n$-vertex $O(1)$-expander  embeds with average distortion $D\ge 1$ into $X$, then necessarily $\dim(X)\gtrsim n^{c/D}$ for some universal constant $c>0$, thus improving over the previously best-known estimate $\dim(X)\gtrsim (\log n)^2/D^2$ of Linial, London and Rabinovich,  strengthening a theorem of Matou\v{s}ek, and answering a question of Andoni, Nikolov, Razenshteyn and Waingarten.
\end{abstract}

\section{Introduction}

Given $n\in \N$ and a symmetric stochastic matrix $\A\in M_n(\R)$,  the eigenvalues of $\A$ will be denoted below by $1=\lambda_1(\A)\ge\ldots\ge \lambda_n(\A)\ge -1$. Here we prove the following statement.

\begin{theorem}\label{thm:main matrix intro}
There is a universal constant $C>0$ with the following property. Fix $n\in \N$ and a symmetric stochastic matrix $\A=(a_{ij})\in M_n(\R)$. For any finite-dimensional normed space $(X,\|\cdot\|)$,
\begin{equation}\label{eq:dimension lower intro}
\forall\, x_1,\ldots,x_n\in X,\qquad \dim(X)\ge \frac12 \exp\left(C\frac{1-\lambda_2(\A)}{\sqrt{n}}\bigg(\frac{\sum_{i=1}^n\sum_{j=1}^n\|x_i-x_j\|^2}{\sum_{i=1}^n\sum_{j=1}^na_{ij}\|x_i-x_j\|^2}\bigg)^{\frac12}\right).
\end{equation}
\end{theorem}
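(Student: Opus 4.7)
Taking logarithms, \eqref{eq:dimension lower intro} is equivalent to the nonlinear spectral gap estimate
$$
\sum_{i,j=1}^n \|x_i-x_j\|^2 \;\lesssim\; \frac{n\log^2(2\dim X)}{(1-\lambda_2(\A))^{2}} \sum_{i,j=1}^n a_{ij}\|x_i-x_j\|^2,
$$
so my goal is to prove a Poincar\'e-type inequality for arbitrary finite-dimensional normed spaces with polylogarithmic dependence on the dimension and a quadratic gain from the spectral gap.

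My plan is to reduce to $\ell_p$ with $p$ well-chosen in terms of $\dim X$ and then combine Markov type $2$ with a sharp mixing argument. Concretely: (i) embed $X$ into $\ell_p$ with $O(1)$ distortion using a $\varepsilon$-net of size $O(1)^{\dim X}$ in the dual ball followed by the inclusion $\ell_\infty \hookrightarrow \ell_p$ at distortion $\exp(\dim X / p)$, with $p$ chosen so that this distortion is $O(1)$; (ii) invoke the Naor--Peres--Schramm--Sheffield theorem, which combined with the Ball--Carlen--Lieb uniform smoothness estimate yields the Markov type $2$ bound $M_2(\ell_p)\lesssim\sqrt{p-1}$; (iii) let $(Z_t)_{t\ge 0}$ be the reversible Markov chain with transition matrix $\A$ started from the uniform distribution and set $f(i)=x_i$, so that chaining Markov type $2$ along the path gives
$$
\mathbb{E}\|f(Z_t)-f(Z_0)\|_p^2 \;\lesssim\; (p-1)\,t\cdot \frac{1}{n}\sum_{i,j=1}^n a_{ij}\|x_i-x_j\|_p^2;
$$
(iv) choose $t$ comparable to a mixing time of $\A$ so that the left-hand side dominates $\tfrac{1}{n^2}\sum_{i,j=1}^n\|x_i-x_j\|_p^2$; and (v) optimize $p$ to balance the $(p-1)$-factor from Markov type against the embedding distortion from step (i).

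The main obstacle is calibrating step (i) with step (iv). A naive net-based embedding forces $p \asymp \dim X$, and a standard $L^\infty$-mixing argument requires $t \asymp \log n/(1-\lambda_2(\A))$; together these yield only the considerably weaker bound of order $\dim(X)\cdot n\log n / (1-\lambda_2(\A))$ for the Poincar\'e ratio, which is off on both the dimension factor and on the power of the spectral gap. Achieving the sharp $(\log\dim X)^2/(1-\lambda_2(\A))^2$ dependence therefore requires two substantially sharper ingredients: a finer reduction to $\ell_p$ with $p \asymp \log \dim X$ (for instance through Pisier's $K$-convexity theorem, which gives $K(X) \lesssim \log \dim X$, together with a Pisier-type factorization), and a Poincar\'e estimate at the $L^2$-mixing scale $t \asymp 1/(1-\lambda_2(\A))$ rather than the $L^\infty$-mixing scale---most likely via a complex-interpolation bound between the Hilbertian Poincar\'e inequality (linear in $1/(1-\lambda_2(\A))$) and the non-Hilbertian Markov-type bound. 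Closing this gap is the crux of the argument, and is where I expect the novelty of the paper to reside.
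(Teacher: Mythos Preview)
You have correctly reformulated the target as the Poincar\'e bound $\gamma(\A,\|\cdot\|^2)\lesssim(\log\dim X)^2/(1-\lambda_2(\A))^2$ and correctly located the two bottlenecks, but neither of your speculated fixes works, and the paper's route is genuinely different from your five-step plan. The embedding step~(i) cannot be sharpened to $p\asymp\log\dim X$: there is no $O(1)$-distortion embedding of a general $k$-dimensional normed space into $\ell_p$ with $p\asymp\log k$ (for instance $\ell_1^k$ has type-$2$ constant $\asymp\sqrt{k}$ while $L_p$ for $p\ge 2$ has type-$2$ constant $\asymp\sqrt{p}$, forcing distortion $\gtrsim\sqrt{k/p}$), and the $K$-convexity bound does not produce such an embedding. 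The paper does not pass through $\ell_p$ at all. Instead it fixes a distance ellipsoid $H$ of $X$ (so $\dd_X\le\sqrt{\dim X}$ by John's theorem), complexifies, and forms the Calder\'on interpolation space $[X_\C,H_\C]_\theta$. A Riesz--Thorin argument shows this space is $\dd_X^{\theta}$-isomorphic to $X$, and since trivially $\mathscr{S}_1(X_\C)=1$ while $\mathscr{S}_2(H_\C)=1$, interpolation of the smoothness constants gives $\mathscr{S}_q([X_\C,H_\C]_\theta)\lesssim 1$ for $q=2/(2-\theta)$.

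Second, even granting an $\ell_p$-embedding, the Markov-type route of steps~(iii)--(iv) cannot shed the $\log n$ loss: converting $\mathbb{E}\|f(Z_t)-f(Z_0)\|^2$ into the uniform-pair average genuinely needs $L^\infty$ mixing, so $t\asymp(\log n)/(1-\lambda_2)$ is forced in that argument. The paper instead invokes a result from~\cite{Nao14} (stated here as Theorem~\ref{thm:quote nao14}) which holds specifically for interpolation spaces with a Hilbert endpoint and gives
\[
\gamma\bigl(\A,\|\cdot\|_{[Z,\mathcal{H}]_\theta}^2\bigr)\;\lesssim\;\frac{\mathscr{S}_q([Z,\mathcal{H}]_\theta)^2}{\theta^{2/q}(1-\lambda_2(\A))^{2/q}}
\]
with no $n$-dependence. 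This bound exploits the Hilbertian endpoint via a Lafforgue--Pisier trick in which the interpolation is performed on the Poincar\'e operator itself, not by interpolating between two separately established inequalities as you propose. Applying it with $Z=X_\C$, $\mathcal{H}=H_\C$ and $q=2/(2-\theta)$, transferring back to $X$ at cost $\dd_X^{2\theta}$, and choosing $\theta\asymp 1/\log\dd_X$ yields $\gamma(\A,\|\cdot\|^2)\lesssim(\log\dd_X)^2/(1-\lambda_2(\A))^2$, which after John's inequality is exactly~\eqref{eq:dimension lower intro}.
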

We shall next explain a noteworthy geometric consequence of Theorem~\ref{thm:main matrix intro} that arises from an examination of its special case when the matrix $\A$ is the normalized adjacency matrix of a connected graph. Before doing so, we briefly recall some standard terminology related to metric embeddings.

Suppose that $(\cM,d)$ is a finite metric space and $(X,\|\cdot\|)$ is a normed space. For $L\ge 0$, a mapping $\f:\cM\to X$ is said to be $L$-Lipschitz if $\|\f(x)-\f(y)\|\le Ld(x,y)$ for every $x,y\in \cM$. For $D\ge 1$, one says that $\cM$ embeds into $X$ with (bi-Lipschitz) distortion $D$ if there is a $D$-Lipschitz mapping $\f:\cM\to X$ such that $\|\f(x)-\f(y)\|\ge d(x,y)$ for every $x,y\in \cM$. Following Rabinovich~\cite{Rab08}, given $D\ge 1$ one says that $\cM$ embeds into $X$ with {average distortion} $D$ if there exists a $D$-Lipschitz mapping $\f:\cM\to X$ such that $\sum_{x,y\in \cM} \|\f(x)-\f(y)\|\ge \sum_{x,y\in \cM} d(x,y)$.

For $n\in \N$ write $[n]= \n$. Fix $k\in \{3,\ldots,n\}$ and let $\G=([n],E_\G)$ be a $k$-regular connected graph whose vertex set is $[n]$. The shortest-path metric that is induced by $\G$ on $[n]$ is denoted $d_\G:[n]\times [n]\to \N\cup\{0\}$. A simple (and standard) counting argument (e.g.~\cite{Mat97}) gives
\begin{equation}\label{eq:average dist bounded degree}
\frac{1}{n^2}\sum_{i=1}^n\sum_{j=1}^n d_\G(i,j)\gtrsim \frac{\log n}{\log k},
\end{equation}
where in~\eqref{eq:average dist bounded degree}, as well as in the rest of this article, we use the following (standard) asymptotic notation. Given two quantities $Q,Q'>0$, the notations
$Q\lesssim Q'$ and $Q'\gtrsim Q$ mean that $Q\le \mathsf{K}Q'$ for some
universal constant $\mathsf{K}>0$. The notation $Q\asymp Q'$
stands for $(Q\lesssim Q') \wedge  (Q'\lesssim Q)$. If  we need to allow for dependence on certain parameters, we indicate this by subscripts. For example, in the presence of an auxiliary parameter $\psi$, the notation $Q\lesssim_\psi Q'$ means that $Q\le c(\psi)Q' $, where $c(\psi) >0$ is allowed to depend only on $\psi$, and similarly for the notations $Q\gtrsim_\psi Q'$ and $Q\asymp_\psi Q'$.

 The normalized adjacency matrix of the graph $\G$, denoted $\A_\G$, is the matrix whose entry at $(i,j)\in [n]\times [n]$ is equal to $\frac{1}{k}\1_{\{i,j\}\in E_\G}$. Denote from now on $\lambda_2(\G)=\lambda_2(\A_\G)$. Let $(X,\|\cdot\|)$ be a finite-dimensional normed space. Fix $D\ge 1$ and  a mapping $\f:[n]\to X$ that satisfies
 \begin{equation}\label{eq:l2 gradient D}
\bigg(\frac{1}{|E_\G|}\sum_{\{i,j\}\in E_\G} \|\f(i)-\f(j)\|^2\bigg)^{\frac12}=\bigg(\frac{1}{n}\sum_{i=1}^n\sum_{j=1}^n (\A_\G)_{ij}\|\f(i)-\f(j)\|^2\bigg)^{\frac12}\le D.
  \end{equation}
 Condition~\eqref{eq:l2 gradient D} holds true, for example, if $\f$ is $D$-Lipschitz as a mapping from $([n],d_\G)$ to $(X,\|\cdot\|)$. Let $\eta>0$ be the implicit constant in the right hand side of~\eqref{eq:average dist bounded degree}, and suppose that $\f$ also satisfies
  \begin{equation}\label{eq:lower squared average dist expander}
  \bigg(\frac{1}{n^2}\sum_{i=1}^n\sum_{j=1}^n \|\f(i)-\f(j)\|^2\bigg)^{\frac12}\ge \eta\frac{\log n}{\log k}.
  \end{equation}
  Due to~\eqref{eq:average dist bounded degree} and the Cauchy--Schwarz inequality, conditions~\eqref{eq:l2 gradient D} and~\eqref{eq:lower squared average dist expander} hold true simultaneously (for an appropriately chosen $\f$) if e.g.~$([n],d_\G)$ embeds with average distortion $D/\eta$ into $(X,\|\cdot\|)$. At the same time, by an application of Theorem~\ref{thm:main matrix intro} with $x_i=\f(i)$ and $\A=\A_\G$ we see that
  $$
  \dim(X)\gtrsim e^{\frac{C\eta\left(1-\lambda_2(\A)\right)\log n}{D\log k} }=n^{\frac{C\eta\left(1-\lambda_2(\A)\right)}{D\log k} }.
  $$
  For ease of later reference, we record this conclusion as the following corollary.
\begin{corollary}\label{coro:expander non embed}
There exists a universal constant $\rho\in (0,\infty)$ such that for every $n\in \N$ and $k\in [n]$, if $\G=([n],E_\G)$ is a connected $n$-vertex $k$-regular graph and $D\ge 1$, then the dimension of any  normed space $(X,\|\cdot\|)$ into which the metric space $([n],d_\G)$ embeds with average distortion $D$ must satisfy $\dim(X)\gtrsim n^{c(\G)/D}$, where $c(\G)=\rho(1-\lambda_2(\A))/\log k$.
\end{corollary}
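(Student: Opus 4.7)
The plan is to verify that any average distortion $D$ embedding $\f:[n]\to X$ automatically satisfies the two hypotheses~\eqref{eq:l2 gradient D} and~\eqref{eq:lower squared average dist expander} highlighted in the discussion preceding the corollary, and then to feed this into Theorem~\ref{thm:main matrix intro} applied to $\A=\A_\G$ and $x_i=\f(i)$. The entire corollary is essentially the special case of that theorem for normalized adjacency matrices of bounded-degree graphs, and the only bit of work is the passage from an $\ell_1$-type distortion hypothesis to the $\ell_2$-type quantity appearing in~\eqref{eq:dimension lower intro}.

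For the upper bound~\eqref{eq:l2 gradient D}: since $\f$ is $D$-Lipschitz as a map $([n],d_\G)\to (X,\|\cdot\|)$ and every graph edge has $d_\G$-length $1$, the summand $\|\f(i)-\f(j)\|^2$ on every edge is at most $D^2$, so~\eqref{eq:l2 gradient D} holds immediately. For the lower bound~\eqref{eq:lower squared average dist expander}: I would apply Cauchy--Schwarz to promote the $\ell_1$-type average-distortion inequality $\sum_{i,j}\|\f(i)-\f(j)\|\ge \sum_{i,j}d_\G(i,j)$ to an $\ell_2$-type inequality, yielding
$$
\bigg(\frac{1}{n^2}\sum_{i,j}\|\f(i)-\f(j)\|^2\bigg)^{\frac12}\ge \frac{1}{n^2}\sum_{i,j}\|\f(i)-\f(j)\|\ge \frac{1}{n^2}\sum_{i,j}d_\G(i,j),
$$
and then invoke the elementary counting estimate~\eqref{eq:average dist bounded degree} to bound the last quantity below by $\eta\log n/\log k$. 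This is exactly~\eqref{eq:lower squared average dist expander}.

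Inserting these two bounds into~\eqref{eq:dimension lower intro}, the ratio inside the square root is at least $\gtrsim n(\log n)^2/(D^2(\log k)^2)$; after taking the square root and multiplying by the $(1-\lambda_2(\A_\G))/\sqrt{n}$ prefactor, the $\sqrt{n}$ factors cancel and the exponent reduces to a universal constant times $(1-\lambda_2(\A_\G))\log n/(D\log k)$. Exponentiating turns this into $n^{c(\G)/D}$ with $c(\G)=\rho(1-\lambda_2(\A_\G))/\log k$ for an appropriate universal $\rho$, absorbing the factor of $1/2$ in Theorem~\ref{thm:main matrix intro} into the suppressed constant in $\gtrsim$. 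There is no real obstacle here: all of the analytic difficulty is encapsulated in Theorem~\ref{thm:main matrix intro}, and the rest is the Cauchy--Schwarz step above together with routine bookkeeping of constants.
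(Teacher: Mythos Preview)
Your proposal is correct and follows essentially the same route as the paper: the paper's proof is precisely the discussion preceding the corollary statement, namely verifying~\eqref{eq:l2 gradient D} from the $D$-Lipschitz condition on edges, obtaining~\eqref{eq:lower squared average dist expander} via Cauchy--Schwarz and~\eqref{eq:average dist bounded degree}, and then substituting into Theorem~\ref{thm:main matrix intro}. Your writeup spells out the Cauchy--Schwarz step and the cancellation of $\sqrt{n}$ a bit more explicitly, but there is no substantive difference.
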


For every $n\in \N$ there exists a $4$-regular graph $\G_n=([n],E_{\G_n})$ with $\lambda_2(\G_n)\le 1-\d$, where $\d\in (0,1)$ is a universal constant; see the survey~\cite{HLW06} for this statement as well as much more on such {\em expander graphs}. It therefore follows from Corollary~\ref{coro:expander non embed} that for every $n\in \N$ there exists an $n$-point metric space $\cM_n$ with the property that its embeddability into any normed space with average distortion $D$ forces the dimension of that normed space to be at least $n^{c/D}$, where $c>0$ is a universal constant.\footnote{A good bound on the constant $c$  can be obtained if one applies Corollary~\ref{coro:expander non embed} to {\em Ramanujan graphs}~\cite{LPS88,Mar88} rather than to arbitrary expanders, but we shall not pursue this here.}  The significance of this statement will be discussed in Section~\ref{sec:dim reduction} below.

The desire to obtain Corollary~\ref{coro:expander non embed} was the goal that initiated our present investigation, because Corollary~\ref{coro:expander non embed} resolves (negatively) a question that was posed  by Andoni, Nikolov, Razenshteyn and Waingarten~\cite[Section~1.6]{ANRW16} in the context of their work on efficient approximate nearest neighbor search (NNS).  Specifically, they devised in~\cite{ANRW16}  an approach for proving a hardness result for NNS that requires the existence of an $n$-vertex expander that embeds with bi-Lipschitz distortion $O(1)$ into some normed space of dimension $n^{o(1)}$. Corollary~\ref{coro:expander non embed} shows that no such expander exists. One may view this statement as a weak indication that perhaps an algorithm for NNS in general norms could be designed with  better  performance than what is currently known, but we leave this interesting algorithmic question for future research and refer to~\cite{ANRW16}  for a full description of this connection. The previously best-known bound in the context of Corollary~\ref{coro:expander non embed} was due to Linial, London and Rabinovich in~\cite[Proposition~4.2]{LLR95}, where it was shown that if $\G$ is  $O(1)$-regular and $\lambda_2(\G)=1-\Omega(1)$, then any normed space $X$ into which $\G$ embeds with average distortion $D$ must satisfy $\dim(X)\gtrsim (\log n)^2/D^2$. The above exponential improvement over~\cite{LLR95}  is sharp, up to the value of  $c$, as shown by Johnson, Lindenstrauss and Schechtman~\cite{JLS87}.

\subsection{Dimensionality reduction}\label{sec:dim reduction} The present work relates to fundamental questions in pure and applied mathematics that have been extensively investigated over the past 3 decades, and are of major current importance. The overarching theme is that of {\em dimensionality reduction}, which corresponds to the desire to ``compress" $n$-point metric spaces using representations with few coordinates, namely embeddings into $\R^k$ with (hopefully) $k$ small, in such a way that pairwise distances could be (approximately) recovered by  computing  lengths in the image with respect to an appropriate norm on $\R^k$. Corollary~\ref{coro:expander non embed} asserts that this cannot be done in general if one aims for compression to $k=n^{o(1)}$ coordinates. In essence, it states that a spectral gap induces an inherent (power-type) high-dimensionality even if one allows for recovery of pairwise distances with large multiplicative errors, or even while only approximately preserving two averages of the squared distances:  along edges and all pairs, corresponding to~\eqref{eq:l2 gradient D} and~\eqref{eq:lower squared average dist expander}, respectively. In other words, we isolate two specific averages of pairwise squared distances of a finite collection of vectors in an arbitrary normed space, and show that if the ratio of these averages is roughly (i.e., up to a fixed but potentially large factor) the same as    in an expander then the dimension of the ambient space must be large.

In addition to obtaining specific results along these lines, there is need to develop techniques to address dimensionality questions that relate nonlinear (metric) considerations to the linear dimension of the vector space. Our main conceptual contribution is to exhibit a new approach to a line of investigations that previously yielded comparable results using algebraic techniques. In contrast, here we use an analytic method arising from a recently developed theory of nonlinear spectral gaps.

Adopting the terminology of~\cite[Definition~2.1]{LLR95}, given $D\in [1,\infty)$, $n\in \N$ and an $n$-point metric space $\cM$, define a quantity $\dim_D(\cM)\in \N$, called the (distortion-$D$) {\em metric dimension} of $\cM$, to be the minimum $k\in \N$ for which there exists a $k$-dimensional normed space $X_\cM$ such that $\cM$ embeds into $X_\cM$ with distortion $D$. We always have $\dim_D(\cM)\le \dim_1(\cM)\le n-1$ by the classical Fr\'echet isometric embedding~\cite{Fre06} into $\ell_\infty^{n-1}$. In their seminal work~\cite{JL84}, Johnson and Lindenstrauss asked~\cite[Problem~3]{JL84} whether   $\dim_D(\cM)=O(\log n)$ for some $D=O(1)$ and every $n$-point metric space $\cM$. Observe that the $O(\log n)$ bound arises naturally here, as it cannot be improved due to a standard volumetric argument when one considers embeddings of the $n$-point equilateral space; see also Remark~\ref{rem:ribe} below for background on  the Johnson--Lindenstrauss question in the context of the Ribe program. Nevertheless, Bourgain proved~\cite[Corollary~4]{Bou85} that this question has a negative answer. He showed that for arbitrarily large $n\in \N$ there is  an $n$-point metric space $\cM_n$ such that $\dim_D(\cM)\gtrsim (\log n)^2/(D\log\log n)^2$ for every $D\in [1,\infty)$. He also posed in~\cite{Bou85}  the natural question of determining the asymptotic behavior of the maximum of $\dim_D(\cM)$ over all $n$-point metric spaces $\cM$. It took over a decade for this question to be  resolved.

In terms of upper bounds, Johnson, Lindenstrauss and Schechtman~\cite{JLS87} proved that there exists a universal constant $\alpha>0$ such that for every $D\ge 1$ and $n\in \N$ we have $\dim_D(\cM)\lesssim_D n^{\alpha/D}$ for any $n$-point metric space $\cM$.  In~\cite{Mat92,Mat96}, Matou\v{s}ek improved this result by showing that one can actually embed $\cM$ with distortion $D$ into $\ell_\infty^k$ for some $k\in \N$ satisfying $k\lesssim_D n^{\alpha/D}$, i.e., the target normed space need not depend on $\cM$ (Matou\v{s}ek's proof is also simpler than that of~\cite{JLS87}, and it yields a smaller value of $\alpha$; see the exposition in Chapter~15 of the monograph~\cite{Mat02}).

In terms of lower bounds, an asymptotic improvement over~\cite{Bou85} was made by Linial, London and Rabinovich~\cite[Proposition~4.2]{LLR95}, who showed that  for arbitrarily large $n\in \N$ there exists an $n$-point metric space $\cM_n$ such that $\dim_D(\cM_n)\gtrsim (\log n)^2/D^2$ for every $D\in [1,\infty)$. For small distortions, Arias-de-Reyna and Rodr{\'{\i}}guez-Piazza proved~\cite{AR92}  the satisfactory assertion that for arbitrarily large $n\in \N$ there exists an $n$-point metric space $\cM_n$ such that $\dim_D(\cM_n) \gtrsim_D n$ for every $1\le D<2$. For larger distortions, it was asked in~\cite[page~109]{AR92} whether for every $D\in (2,\infty)$ and $n\in \N$ we have $\dim_D(\cM)\lesssim_D (\log n)^{O(1)}$ for any $n$-point metric space $\cM$.   In~\cite{Mat96}, Matou\v{s}ek famously answered  this question negatively by proving Theorem~\ref{thm:matousek} below via a clever argument that relies on (a modification of) graphs of large girth with many edges and an existential counting argument (inspired by ideas of Alon, Frankl and R\"odl~\cite{AFR85})  that uses the classical theorem of  Milnor~\cite{Mil64} and Thom~\cite{Tho65} from real algebraic geometry.
\begin{theorem}[Matou\v{s}ek~\cite{Mat96}]\label{thm:matousek} For every $D\ge 1$ and arbitrarily large $n\in \N$, there is  an $n$-point metric space $\cM_{n}(D)$ such that $\dim_D\big(\cM_{n}(D)\big)\gtrsim_D n^{c/D}$, where $c>0$ is a universal constant.
 \end{theorem}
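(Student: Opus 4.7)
The plan is to obtain Matou\v{s}ek's theorem as a direct consequence of Corollary~\ref{coro:expander non embed} by specializing that statement to a constant-degree expander. Since Corollary~\ref{coro:expander non embed} has already been reduced to Theorem~\ref{thm:main matrix intro} in the preceding text, the main work for Theorem~\ref{thm:matousek} itself is essentially bookkeeping.

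First, I would invoke the standard existence statement for bounded-degree expanders (recorded in the paragraph following Corollary~\ref{coro:expander non embed}): for every $n \in \N$ there is a $4$-regular connected graph $\G_n = ([n], E_{\G_n})$ with $\lambda_2(\G_n) \le 1 - \d_0$, where $\d_0 \in (0,1)$ is a universal constant. Define the candidate hard space to be $\cM_n(D) := ([n], d_{\G_n})$, which (conveniently) does not actually depend on $D$.

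Next, I would check that any bi-Lipschitz embedding with distortion $D$ is in particular an embedding with average distortion $D$ in Rabinovich's sense: if $\f : \cM_n(D) \to (X, \|\cdot\|)$ satisfies $d_{\G_n}(i,j) \le \|\f(i) - \f(j)\| \le D\, d_{\G_n}(i,j)$ for all $i,j \in [n]$, then $\f$ is $D$-Lipschitz and $\sum_{i,j} \|\f(i)-\f(j)\| \ge \sum_{i,j} d_{\G_n}(i,j)$ holds term-by-term. Applying Corollary~\ref{coro:expander non embed} to $\G_n$ with this $\f$ yields $\dim(X) \gtrsim n^{c(\G_n)/D}$, and since $c(\G_n) = \rho(1 - \lambda_2(\G_n))/\log 4 \ge \rho \d_0 / \log 4 =: c > 0$ is a universal constant, one gets $\dim_D(\cM_n(D)) \gtrsim n^{c/D}$, which is Matou\v{s}ek's theorem. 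In fact this argument proves strictly more: it gives the same lower bound for the weaker notion of average distortion, and the implicit constant can be taken independent of $D$ (so the ``$\gtrsim_D$'' in Theorem~\ref{thm:matousek} can be upgraded to ``$\gtrsim$'').

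The main obstacle is not in this deduction, which is essentially an unpacking of Corollary~\ref{coro:expander non embed}; the genuine difficulty lies in establishing Theorem~\ref{thm:main matrix intro} in the first place, via the nonlinear spectral gap / complex interpolation / Markov type approach advertised in the abstract. The novelty of the present route, compared to Matou\v{s}ek's original proof~\cite{Mat96} that uses high-girth graphs combined with the Milnor--Thom theorem on sign patterns of real polynomials in an existential counting argument, is that the algebraic-geometric counting step is replaced by a single analytic inequality for vectors in an arbitrary normed space, which simultaneously yields the stronger Corollary~\ref{coro:expander non embed} and answers the Andoni--Nikolov--Razenshteyn--Waingarten question.
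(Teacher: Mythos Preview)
Your proposal is correct and is exactly the route the paper takes: it derives Theorem~\ref{thm:matousek} from Corollary~\ref{coro:expander non embed} by specializing to a bounded-degree expander family $\G_n$ with $\lambda_2(\G_n)\le 1-\d$, so that $c(\G_n)\ge \rho\d/\log 4$ is a universal constant, and then observes that a distortion-$D$ bi-Lipschitz embedding is a fortiori an average-distortion-$D$ embedding. Your remarks that $\cM_n(D)$ can be taken independent of $D$ and that the implicit constant in $\gtrsim_D$ can be made universal also match what the paper notes in the discussion following Theorem~\ref{thm:matousek}.
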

 Due to the upper bound that was quoted above, Matou\v{s}ek's theorem satisfactorily  answers the questions of Johnson--Lindenstrauss and Bourgain, up to the value of the universal constant $c$. Corollary~\ref{coro:expander non embed} also resolves these questions, via an approach for deducing dimensionality lower bounds from rough (bi-Lipschitz) metric information that differs markedly from Matou\v{s}ek's argument.

 Our solution has some new features. The spaces $\cM_n(D)$ of Theorem~\ref{thm:matousek} can actually be taken to be independent of the distortion $D$, while the construction of~\cite{Mat96} depends on $D$ (it is based on graphs of girth of order $D$). One could alternatively achieve this by considering the disjoint union of the spaces $\{\cM_n(2^k)\}_{k=0}^{m}$ for  $m\asymp \log n$, which is a metric space of size $O(n\log n)$. More importantly, rather than using an ad-hoc construction (relying also on a non-constructive existential statement) as in~\cite{Mat96}, here we specify a natural class of metric spaces, namely the shortest-path metrics on  expanders (see also Remark~\ref{rem:quotients} below), for which Theorem~\ref{thm:matousek} holds. Obtaining this result for this concrete class of metric spaces is needed to answer the question of~\cite{ANRW16} that was quoted above. Finally, Matou\v{s}ek's approach based on the Millnor--Thom theorem uses the fact that the embedding has controlled bi-Lipschitz distortion, while our approach is robust in the sense that it deduces the stated lower bound on the dimension from an embedding with small average distortion.

\begin{remark}\label{rem:ribe} The {\em Ribe program} aims to uncover an explicit ``dictionary" between the local theory of Banach spaces and general metric spaces, inspired by an important rigidity theorem of Ribe~\cite{Rib76} that indicates that a dictionary of this sort should exist. See the introduction of~\cite{Bou86} as well as the surveys~\cite{Kal08,Nao12,Bal13} and the monograph~\cite{Ost13} for more on this topic. While more recent research on dimensionality reduction is most often motivated by the need to compress data, the initial motivation of the question of Johnson and Lindenstrauss~\cite{JL84} that we quoted above arose from the Ribe program. It seems simplest to include here a direct quotation of Matou\v{s}ek's explanation in~\cite[page~334]{Mat96} for the origin of the investigations that led to Theorem~\ref{thm:matousek}.
\blockquote{\em ...This investigation started in the context of the
local Banach space theory, where the general idea was to obtain some analogs for
general metric spaces of notions and results dealing with the structure of finite
dimensional subspaces of Banach spaces. The distortion of a mapping should
play the role of the norm of a linear operator, and the quantity $\log n$, where $n$ is
the number of points in a metric space, would serve as an analog of the dimension
of a normed space. Parts of this programme have been carried out by Bourgain,
Johnson, Lindenstrauss, Milman and others...}
Despite many previous successes of the Ribe program, not all of the questions that it raised turned out to have a positive answer (see e.g.~\cite{MN13-convexity}).  Theorem~\ref{thm:matousek} is among the most extreme examples of failures of natural steps in the Ribe program, with the final answer being exponentially worse than the initial predictions.  Corollary~\ref{coro:expander non embed} provides a further explanation of this phenomenon.
\end{remark}

\begin{remark}\label{rem:quotients}  The reasoning prior to Corollary~\ref{coro:expander non embed} gives  the following statement that applies to regular graphs that need not have bounded degree. Fix $\beta>0$ and $n\in \N$. Suppose that $\G=([n],E_\G)$ is a connected regular graph that satisfies $(1-\lambda_2(\G))\sum_{i=1}^n\sum_{j=1}^n d_\G(i,j)\ge \beta n^2\log n$. Then, $\dim_D(\G)\gtrsim n^{C\beta/D}$ for every $D\ge 1$, where $C>0$ is the universal constant of Theorem~\ref{thm:main matrix intro} and we use the notation $\dim_D([n],d_\G)=\dim_D(\G)$. Let $\diam(\G)$ be the diameter of $([n],d_\G)$ and suppose (for simplicity) that $\G$ is vertex-transitive (e.g., $\G$ can be the Cayley graph of a finite group). Then, it is  simple to check that $n^2\diam(\G)\ge \sum_{i=1}^n\sum_{j=1}^n d_\G(i,j)\ge n^2\diam(\G)/4$ (see. e.g.~equation (4.24) in~\cite{Nao14}), and therefore the above reasoning shows that every vertex-transitive graph satisfies
\begin{equation}\label{eq:woith diam}
\forall\, D\ge1,\qquad \dim_D(\G)\gtrsim e^{\frac{C}{4D}(1-\lambda_2(\G))\diam(\G)}.
 \end{equation}
In particular, it follows from~\eqref{eq:woith diam} that if $([n],d_\G)$ embeds with distortion $O(1)$ into some normed space of dimension $(\log n)^{O(1)}$, then necessarily $(1-\lambda_2(\G))\diam(\G)\lesssim \log\log n$.

 There are many examples of Cayley graphs $\G=([n],E_\G)$ for which $\lambda_2(\G)=1-\Omega(1)$ and $\diam(\G)\gtrsim \log n$ (see e.g.~\cite{AR94,NR09}). In all such examples, \eqref{eq:woith diam} asserts that $\dim_D(\G)\gtrsim n^{c/D}$ for some universal constant $c>0$. The Cayley graph that was studied in~\cite{KN06} (a quotient of the Hamming cube by a good code) now shows that there exist arbitrarily large $n$-point metric spaces $\cM_n$ with $\dim_1(\cM_n)\lesssim \log n$  (indeed, $\cM_n$ embeds isometrically into $\ell_1^{k}$ for some $k\lesssim \log n$), yet $\cM_n$ has a $O(1)$-Lipschitz quotient (see~\cite{BJLPS99} for the relevant definition)  that does not embed with distortion $O(1)$ into any normed space of dimension $n^{o(1)}$. To the best of our knowledge, it wasn't previously known that the metric dimension $\dim_D(\cdot)$ can become asymptotically larger (and even increase exponentially)  under Lipschitz quotients, which is yet another major departure from the linear theory, in contrast to what one would normally predict in the context of the Ribe program.
\end{remark}

\subsection{Roadmap} Theorem~\ref{thm:main matrix intro} will be proven in Section~\ref{sec:proof main}, which starts with an informal overview of the main ideas that enter into the proof. Section~\ref{sec:average dist}  derives an additional example of an application of these ideas to metric embedding theory. We end with Section~\ref{sec:discussion}, which contains  further discussion  about dimensionality reduction questions and presents some important open problems.

\subsection*{Acknowledgements} I am grateful to Alex Andoni and Ilya Razenshteyn for encouraging me to work on the question that is addressed here. I also thank Gideon Schechtman for helpful discussions.

\section{Proof of Theorem~\ref{thm:main matrix intro}}\label{sec:proof main}

 Modulo the use of a  theorem about nonlinear spectral gaps which is a main result of~\cite{Nao14}, our proof  of Theorem~\ref{thm:main matrix intro} is not long. We rely here on an argument that  perturbs any finite-dimensional normed space (by complex interpolation with its distance ellipsoid)  so as to make the result of~\cite{Nao14} become applicable, and we  proceed to show that by optimizing over the size of the perturbation one can deduce the desired dimensionality-reduction lower bound. This idea is the main conceptual contribution of the present work, and we derive an additional application of it to embedding theory in Section~\ref{sec:average dist} below.  We begin with an informal overview of this argument.

\subsection{Overview}\label{sec:overview} The precursors of our approach are the works~\cite{LN04-diamond} and~\cite{LMN05} about the impossibility of dimensionality reduction in $\ell_1$ and $\ell_\infty$, respectively. It was shown in~\cite{LN04-diamond} (respectively~\cite{LMN05}) that a certain $n$-point metric space $\cM_1$ (respectively $\cM_\infty$) does not admit a low-distortion embedding into $X=\ell_1^k$ (respectively $X=\ell_\infty^k$) with $k$ small, by arguing that if $k$ were indeed small then there would be  a normed space $Y$ that is ``close" to $X$, yet any embedding of $\cM_1$ (respectively $\cM_\infty$) into $Y$ incurs large distortion. This leads to a contradiction, provided that the assumed embedding of $\cM_1$ (respectively $\cM_\infty$) into $X$ had sufficiently small distortion relative to the closeness of $Y$ to $X$. In the setting of~\cite{LN04-diamond,LMN05}, there is a natural one-parameter family of normed spaces that tends to $X$, namely the spaces $\ell_p^k$ with $p\to 1$ or $p\to \infty$, respectively, and indeed the space $Y$ is taken to be an appropriate member of this  family. For a general normed space $X$, it is a priori unclear how to perturb it so as to implement this strategy. Moreover, the arguments of~\cite{LN04-diamond,LMN05} rely on additional special properties of the specific normed spaces in question that hinder their applicability to general normed spaces: The example of~\cite{LN04-diamond} is unsuited to the question that we study here because in was shown in~\cite{KLMN05} that in fact $\dim_D(\cM_1)\lesssim \log n$ for some $D=O(1)$\footnote{Specifically,  the space considered in~\cite{LN04-diamond} was shown in~\cite{KLMN05} to embed with distortion $O(1)$ into $\ell_\infty^{O(\log n)}$, and by~\cite{Rab08} it even embeds with average distortion $O(1)$ into the real line.}; and, the proof in~\cite{LMN05} of the non-embeddability of $\cM_\infty$ into $Y$ is based on a theorem of Matou\v{s}ek~\cite{Mat97} whose proof relies heavily on the coordinate structure of $Y=\ell_p^k$. We shall overcome the former difficulty  by using the complex interpolation method to perturb $X$, and we shall overcome the latter difficulty by invoking the theory of nonlinear spectral gaps.

Suppose that $(X,\|\cdot\|)$ is a finite-dimensional normed space. The perturbative step of our argument considers the Hilbert space $H$ whose unit ball is an ellipsoid that is closest to the unit ball of $X$, i.e., a {\em distance ellipsoid} of $X$; see Section~\ref{sec:dist ellipsoid} below. We then use the complex interpolation method (see Section~\ref{sec:interpolation} below) to obtain a one-parameter family of normed spaces $\{[X_\C,H_\C]_\theta\}_{\theta\in [0,1]}$ that intertwines the complexifications (see Section~\ref{sec:complexification} below) of $X$ and $H$, respectively. These intermediate spaces will serve as a proxy for the one-parameter family $\{\ell_p^n\}_{p\in [1,\infty]}$ that was used in~\cite{LMN05}. In order to see how they fit into this picture we briefly recall the argument of~\cite{LMN05}.

Suppose that $\G=([n],E_\G)$ is a $O(1)$-regular graph with $\lambda_2(\G)=1-\Omega(1)$ (i.e., an expander). In~\cite[Proposition~4.1]{LMN05} it was shown that for every $D\ge 1$ and $k\in \N$,  if $([n],d_\G)$ embeds with distortion $D$ into $\ell_\infty^k$, then necessarily $k\gtrsim n^{c/D}$ for some universal constant $c>0$. This is so because Matou\v{s}ek proved in~\cite{Mat97} that for any $p\in [1,\infty)$, any embedding of $([n],d_\G)$ into $\ell_p$ incurs distortion at least  $\eta(\log n)/p$, where $\eta>0$ is a universal constant. The norms on  $\ell_\infty^k$ and $\ell_{\log k}^k$ are within a factor of $e$ of each other, so it follows that $D\ge \eta(\log n)/(e\log k)$, i.e., $k\ge n^{\eta/(e D)}$.

The reason for the distortion lower bound of~\cite{Mat97} that was used above is that~\cite{Mat97}  shows that there exists a universal constant $C>0$ such that for every $p\ge 1$ we have
\begin{equation}\label{eq:mat extrapolation}
\forall\, t_1,\ldots,t_n\in \R,\qquad \frac{1}{n^2} \sum_{i=1}^n\sum_{j=1}^n |t_i-t_j|^p\le \frac{(Cp)^p}{|E_\G|}\sum_{\{i,j\}\in E_\G} |t_i-t_j|^p.
\end{equation}
The proof of~\eqref{eq:mat extrapolation} relies on the fact that the case $p=2$ of~\eqref{eq:mat extrapolation} is nothing more than the usual Poincar\'e inequality that follows through elementary linear algebra from the fact that $\lambda_2(\G)$ is bounded away from $1$, combined with an extrapolation argument that uses elementary inequalities for real numbers (see also the expositions in~\cite{BLMN05,NS11}). By summing~\eqref{eq:mat extrapolation} over coordinates we deduce that
\begin{equation}\label{eq:lp matousek}
\forall\, x_1,\ldots,x_n\in \ell_p,\qquad \bigg(\frac{1}{n^2} \sum_{i=1}^n\sum_{j=1}^n \|x_i-x_j\|_p^p\bigg)^{\frac{1}{p}}\lesssim p\bigg(\frac{1}{|E_\G|}\sum_{\{i,j\}\in E_\G} \|x_i-x_j\|_p^p\bigg)^{\frac{1}{p}}.
\end{equation}
This implies that any embedding of $([n],d_\G)$ into $\ell_p$ incurs average distortion at least a constant multiple of $(\log n)/p$ via the same reasoning as  the one that preceded Corollary~\ref{coro:expander non embed}.

The reliance on coordinate-wise inequalities in the derivation of~\eqref{eq:lp matousek} is problematic when it comes to the need to treat a general finite-dimensional normed space $(X,|\cdot\|)$. This ``scalar" way of reasoning also leads to the fact that in~\eqref{eq:lp matousek} the $\ell_p$ norm is raised to the power $p$. Since, even in the special case $X=\ell_p^{k}$, \eqref{eq:lp matousek}  is applied in the above argument when $p=\log \dim(X)$, this hinders our ability to deduce an estimate such as the conclusion~\eqref{eq:dimension lower intro} of Theorem~\ref{thm:main matrix intro}.

To overcome this obstacle, we consider a truly nonlinear (quadratic) variant of~\eqref{eq:lp matousek} which is known as a {\em nonlinear spectral-gap inequality}. See Section~\ref{sec:gap} below for the formulation of this concept, based on a line of works in metric geometry that has been more recently investigated systematically  in~\cite{MN13,MN14,Nao14,MN15}. Our main tool is a result of~\cite{Nao14}, which is quoted as Theorem~\ref{thm:quote nao14}  below. It provides an estimate in the spirit of~\eqref{eq:lp matousek} for $n$-tuples of vectors in each of the complex interpolation spaces $\{[X_\C,H_\C]_\theta\}_{\theta\in (0,1]}$, in terms of the parameter $\theta$ and the $p$-smoothness constant of the normed space $[X_\C,H_\C]_\theta$ (see Section~\ref{sec:smoothness} below for the relevant definition). We then implement the above perturbative strategy  by estimating the closeness of $X$ to a subspace of $[X_\C,H_\C]_\theta$, and optimizing over the auxiliary interpolation parameter $\theta$.

While the result of~\cite{Nao14} that we use here is substantial, we  encourage readers to examine its proof rather than relying on it as a ``black box," because we believe that this proof is illuminating and accessible to non-experts. Specifically, the proof in~\cite{Nao14} of Theorem~\ref{thm:quote nao14}  below relies on  Ball's notion of Markov type~\cite{Bal92} $p$ through the martingale method of~\cite{NPSS06}, in combination with complex interpolation and a trick of V.~Lafforgue that was used by Pisier in~\cite{Pis10}. It is interesting to observe that here we use  the fact that bound that is obtained in~\cite{Nao14} depends on the $p$-smoothness constant of    $[X_\C,H_\C]_\theta$, but it contains  no other dependence on $p$. Since in our final optimization over $\theta$ we take $p$ to be very close to $1$, we can't allow for an implicit dependence on $p$ that is unbounded as $p\to 1$. Such a $p$-independent bound is indeed obtained in~\cite{Nao14}, but unlike the present application, it was a side issue in~\cite{Nao14}, where only the case $p=2$ was used.

\subsection{Distance ellipsoids}\label{sec:dist ellipsoid} Recall that given $\dd\in [1,\infty)$, a Banach space $(X,\|\cdot\|)$ is said to be $\dd$-isomorphic to a Hilbert space if it admits  a scalar product $\langle\cdot,\cdot\rangle:X\times X\to \R$, such that if we denote its associated Hilbertian norm by $|x|=\sqrt{\langle x,x\rangle}$, then
\begin{equation}\label{eq:d-isomorph}
\forall\, x\in X,\qquad |x|\le \|x\|\le \dd |x|.
\end{equation}
 The (Banach--Mazur) {\em Euclidean distance} of $X$, denoted $\dd_X\in [1,\infty)$, is then defined to be the infimum over those $\dd\in [1,\infty)$ for which~\eqref{eq:d-isomorph} holds true.  If $X$ is not $\dd$-isomorphic to a Hilbert space for any $\dd\in [1,\infty)$, then we write $\dd_X=\infty$. If $X$ is finite-dimensional, then John's theorem~\cite{Joh48} asserts that $\dd_X\le \sqrt{\dim(X)}$ (and, in many settings asymptotically better bounds on $\dd_X$ in terms of $\dim(X)$ are known; see~\cite{MW78,Tom89}).   By a standard compactness argument, if $X$ is finite-dimensional, then the infimum in the definition of $\dd_X$ is attained. In that case, the unit ball of the Hilbertian norm $|\cdot|$, i.e., the set $\{x\in X:\ |x|\le 1\}$, is commonly called a {\em distance ellipsoid} of $X$. Note that the distance ellipsoid need not be unique; see~\cite{Pra02} for more on this topic.

\subsection{Nonlinear spectral gaps}\label{sec:gap} Suppose that $(\cM,d_\cM)$ is a metric space, $n\in \N$ and $p\in (0,\infty)$. Following~\cite{MN14}, the (reciprocal of) the {\em nonlinear spectral gap} with respect to $d_{\cM}^p$ of a symmetric stochastic matrix $\A=(a_{ij})\in M_n(\R)$, denoted $\gamma(\A,d_\cM^p)$, is the smallest $\gamma\in (0,\infty)$ such that
$$
\forall\,x_1,\ldots,x_n\in \cM,\qquad \frac{1}{n^2}\sum_{i=1}^n \sum_{j=1}^n d_{\cM}(x_i,x_j)^p\le \frac{\gamma}{n} \sum_{i=1}^n \sum_{j=1}^n a_{ij}d_{\cM}(x_i,x_j)^p.
$$
We refer to~\cite{MN14} for an extensive discussion of this notion; it suffices to state here that the reason for this nomenclature is that if we denote the standard metric on the real line by $d_\R$  (i.e., $d_\R(s,t)=|s-t|$ for every $s,t\in \R$), then it is straightforward to check that $\gamma(\A,d_\R^2)=1/(1-\lambda_2(\A))$.

In general, nonlinear spectral gaps can differ markedly from the usual (reciprocal of) the gap in the (linear) spectrum, though~\cite{Nao14} is devoted to an investigation of various settings in which one can obtain comparison inequalities for nonlinear spectral gaps when the underlying metric is changed. Estimates on $\gamma(\A,d_\cM^p)$ have a variety of applications in metric geometry, and  here we establish their relevance to dimensionality reduction. Specifically, we shall derive below the following result, which  will be shown to imply Theorem~\ref{thm:main matrix intro}.

\begin{theorem}[Nonlinear spectral gap for Hilbert isomorphs]\label{th:isomorph}  Fix $n\in \N$ and a symmetric stochastic matrix $\A=(a_{ij})\in M_n(\R)$. Then for every normed space $(X,\|\cdot\|)$ with $\dd_X<\infty$, we have
\begin{equation}\label{eq:cases optimized gamma}
\gamma\big(\A,\|\cdot\|^2\big)\lesssim \left\{\begin{array}{ll}
\frac{\dd_X^2}{1-\lambda_2(\A)}&\mathrm{if\ }\  \dd_X\sqrt{1-\lambda_2(\A)}\le e,\\
\left(\frac{\log\left(\dd_X\sqrt{1-\lambda_2(\A)}\right)}{1-\lambda_2(\A)}\right)^2&\mathrm{if\ }\  \dd_X\sqrt{1-\lambda_2(\A)}> e.\end{array}\right.
\end{equation}
\end{theorem}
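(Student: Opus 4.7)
The strategy combines a perturbative argument via complex interpolation with the distance ellipsoid and the nonlinear spectral-gap theorem from \cite{Nao14} (the statement that the paper will quote shortly as Theorem~\ref{thm:quote nao14}). Fix a Hilbertian norm $|\cdot|$ on $X$ realizing $\dd_X$, so that $|x|\le\|x\|\le\dd_X|x|$ for all $x\in X$, and let $H=(X,|\cdot|)$. In the easy regime $\dd_X\sqrt{1-\lambda_2(\A)}\le e$, the identity $\gamma(\A,|\cdot|^2)=1/(1-\lambda_2(\A))$ for any Hilbertian norm, combined with the two-sided comparison between $\|\cdot\|$ and $|\cdot|$, gives $\gamma(\A,\|\cdot\|^2)\le \dd_X^2/(1-\lambda_2(\A))$, which is the first case of~\eqref{eq:cases optimized gamma}.

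For the main regime $\dd_X\sqrt{1-\lambda_2(\A)}>e$, pass to the complexifications $X_\C$ and $H_\C$ and consider the complex interpolation space $Y_\theta=[X_\C,H_\C]_\theta$ for $\theta\in(0,1]$. Interpolating the identity maps $X_\C\to X_\C$ (norm $1$) and $H_\C\to X_\C$ (norm $\dd_X$), and dually $X_\C\to X_\C$ (norm $1$) and $X_\C\to H_\C$ (norm $1$), the Riesz--Thorin inequality for interpolation functors yields
\begin{equation*}
\|x\|_{Y_\theta}\le \|x\|_{X_\C}\le \dd_X^{\theta}\,\|x\|_{Y_\theta}.
\end{equation*}
This two-sided comparison between $X_\C$ and $Y_\theta$ propagates to spectral gaps in squared norm:
\begin{equation*}
\gamma\bigl(\A,\|\cdot\|^2\bigr)\le \gamma\bigl(\A,\|\cdot\|_{X_\C}^2\bigr)\le \dd_X^{2\theta}\,\gamma\bigl(\A,\|\cdot\|_{Y_\theta}^2\bigr).
\end{equation*}

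Now invoke Theorem~\ref{thm:quote nao14}: it controls $\gamma(\A,\|\cdot\|_{Y_\theta}^2)$ in terms of $1-\lambda_2(\A)$ and the $p$-uniform smoothness constant of $Y_\theta$ for the natural choice $p=2/(2-\theta)\in(1,2]$, with no other explicit dependence on $p$. Because $Y_\theta$ is the interpolation of an arbitrary Banach space against a Hilbert space, classical theory ensures that this $p$-smoothness constant admits a universal bound, so the estimate takes the form $\gamma(\A,\|\cdot\|_{Y_\theta}^2)\lesssim 1/(1-\lambda_2(\A))^{2/p}=1/(1-\lambda_2(\A))^{2-\theta}$. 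Substituting,
\begin{equation*}
\gamma\bigl(\A,\|\cdot\|^2\bigr)\lesssim \frac{\bigl(\dd_X^{2}(1-\lambda_2(\A))\bigr)^{\theta}}{(1-\lambda_2(\A))^{2}}.
\end{equation*}
Writing $u\eqdef \dd_X^2(1-\lambda_2(\A))>e^2$, the choice $\theta\asymp (\log\log u)/\log u\in(0,1]$ makes $u^{\theta}\asymp(\log u)^2\asymp \log^2(\dd_X\sqrt{1-\lambda_2(\A)})$, producing the second case of~\eqref{eq:cases optimized gamma}.

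The principal obstacle is to wield Theorem~\ref{thm:quote nao14} correctly: the $p$-independence of its constant is indispensable, since the optimization pushes $p\to 1$ (equivalently $\theta\to 0$), where any blow-up of the form $1/(p-1)$ would destroy the logarithmic gain. Granted that clean form of the nonlinear spectral gap on the interpolation scale, the Banach--Mazur interpolation bound $\dd_X^{\theta}$ and the final scalar optimization in $\theta$ are essentially bookkeeping.
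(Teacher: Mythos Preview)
Your overall architecture matches the paper's: interpolate $X_\C$ against the distance-ellipsoid Hilbert space $H_\C$, bound the distance from $X_\C$ to $Y_\theta=[X_\C,H_\C]_\theta$ by $\dd_X^{\theta}$ via Riesz--Thorin, control the smoothness of $Y_\theta$ by interpolating $\mathscr{S}_1(X_\C)\lesssim 1$ against $\mathscr{S}_2(H_\C)=1$, and then invoke Theorem~\ref{thm:quote nao14}. The first case is also handled correctly. But there is a genuine gap in the main regime.

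You have dropped the factor $\theta^{-2/q}$ from the conclusion of Theorem~\ref{thm:quote nao14}. The theorem gives
\[
\gamma\bigl(\A,\|\cdot\|_{Y_\theta}^2\bigr)\lesssim \frac{\mathscr{S}_q(Y_\theta)^2}{\theta^{2/q}\,(1-\lambda_2(\A))^{2/q}},
\]
and with $q=2/(2-\theta)$ one has $2/q=2-\theta\asymp 2$, so the correct bound after transferring back to $X$ is
\[
\gamma\bigl(\A,\|\cdot\|^2\bigr)\lesssim \frac{u^\theta}{\theta^{2}\,(1-\lambda_2(\A))^{2}},\qquad u=\dd_X^{2}(1-\lambda_2(\A)).
\]
Without the $\theta^{-2}$ your displayed bound $u^\theta/(1-\lambda_2(\A))^2$ is \emph{monotone} in $\theta$ (since $u>1$), so its infimum over $\theta\in(0,1]$ is attained as $\theta\to 0^+$ and equals $1/(1-\lambda_2(\A))^2$, a bound independent of $\dd_X$. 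That is far too strong (it would, for instance, contradict the known behavior of expanders in $\ell_\infty^k$), and your choice $\theta\asymp(\log\log u)/\log u$ is not a minimizer of anything---it is simply a value that happens to produce $(\log u)^2$.

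Once the $\theta^{-2}$ is restored, the genuine minimization of $u^\theta/\theta^2$ is at $\theta=2/\log u\in(0,1]$, which gives $u^\theta/\theta^2\asymp(\log u)^2$; this is exactly how the paper obtains the second case of~\eqref{eq:cases optimized gamma} (as the $p=1$ specialization of Theorem~\ref{thm:Sp version with distance to l2}). Your remark that the implicit constant in Theorem~\ref{thm:quote nao14} must not blow up as $q\to 1$ is correct and important, but the $\theta$-dependence is equally essential: it is precisely what prevents the bound from improving without limit as $Y_\theta$ degenerates back to $X_\C$.
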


\begin{proof}[Proof of Theorem~\ref{thm:main matrix intro} assuming Theorem~\ref{th:isomorph}] We claim that~\eqref{eq:cases optimized gamma} implies the following simpler bound.
\begin{equation}\label{eq:gamma hilbert isomorph}
\gamma\big(\A,\|\cdot\|^2\big)\lesssim \left(\frac{\log \left(\dd_X\sqrt{2}\right)}{1-\lambda_2(\A)}\right)^2.
\end{equation}
Indeed, if $\dd_X\sqrt{1-\lambda_2(\A)}> e$, then the right hand side of~\eqref{eq:gamma hilbert isomorph} is at least the right hand side of~\eqref{eq:cases optimized gamma}   due to the fact that, since $\A$ is symmetric and stochastic, $\lambda_2(\A)\ge -1$, so that $\sqrt{1-\lambda_2(\A)}\le \sqrt{2}$. On the other hand, if $\dd_X\sqrt{1-\lambda_2(\A)}\le e$ then $\dd_X^2/(1-\lambda_2(\A))\le e^2/(1-\lambda_2(\A))^2$, which is at most a universal constant multiple of the right hand side of~\eqref{eq:gamma hilbert isomorph} because $\dd_X\ge 1$.

By the definition of $\gamma\big(\A,\|\cdot\|^2\big)$, it follows from~\eqref{eq:gamma hilbert isomorph} that there exists a universal constant $\alpha>0$ such that for every $x_1,\ldots,x_n\in X$ we have
$$
\frac{1}{n^2}\sum_{i=1}^n\sum_{j=1}^n \|x_i-x_j\|^2\le \alpha \left(\frac{\log \left(\dd_X\sqrt{2}\right)}{1-\lambda_2(\A)}\right)^2\cdot \frac{1}{n}\sum_{i=1}^n\sum_{j=1}^n a_{ij} \|x_i-x_j\|^2.
$$
This estimate simplifies to give
\begin{equation}\label{eq:dx lower}
\dd_X\ge \frac{1}{\sqrt{2}} \exp\left(\frac{1-\lambda_2(\A)}{\sqrt{\alpha n}}\bigg(\frac{\sum_{i=1}^n\sum_{j=1}^n\|x_i-x_j\|^2}{\sum_{i=1}^n\sum_{j=1}^na_{ij}\|x_i-x_j\|^2}\bigg)^{\frac12}\right).
\end{equation}
The desired estimate~\eqref{eq:dimension lower intro}  (with $C=2/\sqrt{\alpha}$) now follows because $\dd_X\le \sqrt{\dim(X)}$ by~\cite{Joh48}.
\end{proof}

\begin{remark}\label{rem:must change norm}
Suppose that $\G=([n],E_\G)$ is a Cayley graph of a finite group such that $\lambda_2(\G)=1-\Omega(1)$. The metric space $([n],d_\G)$  embeds with distortion $\diam(\G)$ into $\ell_2^{n-1}$ by considering any bijection between $[n]$ and the vertices of the $n$-simplex. There is therefore no a priori reason why it wouldn't be possible to embed $([n],d_\G)$ with distortion $O(1)$ into some normed space $X$ whose Banach--Mazur distance from a Hilbert space is at least a sufficiently large multiple of $\diam (\G)$. But this is not so if $\diam(\G)$ is sufficiently large. Indeed, recalling Remark~\ref{rem:quotients}, it follows from~\eqref{eq:dx lower} that any embedding of  $([n],d_\G)$ into $X$ incurs distortion that is at least a universal constant multiple of $\diam(\G)/\log(2\dd_X)$. Thus, even if we allow  $\dd_X$ to be as large as $\diam(\G)^{O(1)}$, then any embedding of $([n],d_\G)$ into $X$ incurs distortion that is at least a universal constant multiple of $\diam(\G)/\log\diam(\G)$. Also, if $\diam(\G)\gtrsim \log n$ (e.g., if $\G$ has bounded degree) then this means that any embedding of  $([n],d_\G)$ into $X$ incurs distortion that is at least a universal constant multiple of $(\log n)/\log(2\dd_X)$ and, say, even if we allow  $\dd_X$ to be as large as $(\log n)^{O(1)}$, then any embedding of $([n],d_\G)$ into $X$ incurs distortion that is at least a universal constant multiple of $(\log n)/\log\log n$.
\end{remark}

\subsection{Proof of Theorem~\ref{th:isomorph}} We have seen that in order to prove Theorem~\ref{thm:main matrix intro} it suffices to prove Theorem~\ref{th:isomorph}. In order to do so, we shall first describe several ingredients that appear in its proof.

\subsubsection{Uniform convexity and smoothness}\label{sec:smoothness} Suppose that $(X,\|\cdot\|)$ is a normed space and fix $p,q>0$ satisfying $1\le p\le2\le q$. Following Ball, Carlen and Lieb~\cite{BCL94}, the $p$-smoothness constant of $X$, denoted $\mathscr{S}_p(X)$, is the infimum over those $S>0$ such that
\begin{equation}\label{eq:def smoothness}
\forall\, x,y\in X,\qquad \|x+y\|^p+\|x-y\|^p\le 2\|x\|^p+2S^p\|y\|^p.
\end{equation}
(If no such $S$ exists, then define $\mathscr{S}_p(X)=\infty$.) By the triangle, inequality we always have $\mathscr{S}_1(X)=1$. The $q$-convexity constant of $X$, denoted $\mathscr{K}_q(X)$, is the infimum over those $K>0$ such that
$$
\forall\, x,y\in X,\qquad 2\|x\|^q+\frac{2}{K^q}\|y\|^q\le \|x+y\|^q+\|x-y\|^q.
$$
(As before, if no such $K$ exists, then define $\mathscr{K}_q(X)=\infty$.) We refer to~\cite{BCL94} for the relation of these parameters to more traditional moduli of uniform convexity and smoothness that appear in the literature. It is beneficial to work with the quantities $\mathscr{S}_p(X), \mathscr{K}_q(X)$ rather than the classical moduli because they are well-behaved with respect to basic operations, an example of which is the duality $\mathscr{K}_{p/(p-1)}(X^*)=\mathscr{S}_p(X)$, as shown in~\cite{BCL94}. Another example that is directly relevant to us is their especially clean behavior under complex interpolation, as derived in Section~\ref{sec:interpolation} below.

\subsubsection{Complexification}\label{sec:complexification} All of the above results were stated for normed spaces over the real numbers, but in the ensuing proofs we need to consider normed spaces over the complex numbers. We do so through the use of the standard notion of complexification. Specifically, for every normed space $(X,\|\cdot\|_X)$ over $\R$ one associates as follows a normed space $(X_\C,\|\cdot\|_{X_\C})$ over $\C$. The underlying vector space is $X_\C=X\times X$, which is viewed as a vector space over $\C$ by setting $(\alpha+\beta i)(x,y)=(\alpha x-\beta y,\beta x+\alpha y)$ for every $\alpha,\beta\in \R$ and $x,y\in X$. The norm on $X_\C$ is given by
\begin{equation}\label{eq:def complexification}
\forall\, x,y\in X,\qquad \|(x,y)\|_{X_\C}=\bigg(\frac{1}{\pi}\int_0^{2\pi}\big\|(\cos\theta)x-(\sin\theta) y\big\|^2_X\ud \theta\bigg)^{\frac12}.
\end{equation}

The normalization in~\eqref{eq:def complexification} ensures that $x\mapsto (x,0)$ is an isometric embedding of $X$  into $X_\C$. It is straightforward to check that   for every $n\in \N$ and every symmetric stochastic matrix $\A\in M_n(\R)$ we have $\gamma(\A,\|\cdot\|_X^2)=\gamma(\A,\|\cdot\|_{X_\C}^2)$. Also, $\mathscr{S}_2(X_\C)=\mathscr{S}_2(X)$ and $\mathscr{K}_2(X_\C)=\mathscr{K}_2(X)$. When $p\in (1,2)$ and $q\in (2,\infty)$ we have $\mathscr{S}_p(X_\C)\asymp\mathscr{S}_p(X)$ and $\mathscr{K}_q(X_\C)\asymp\mathscr{K}_q(X)$; if one were to allow the implicit constants in these asymptotic  equivalences to depend on $p,q$ then this follows from the results of~\cite{FP74,Fig76,BCL94}, and the fact that these constants can actually be taken to be universal follows from carrying out the relevant arguments with more care, as done in~\cite{Nao12-azuma,MN14} (see specifically Lemma~6.3 and Corollary 6.4 of~\cite{MN14}). Finally, we have $\dd_{X_\C}=\dd_{X}$.

\subsubsection{Complex interpolation}\label{sec:interpolation} We very briefly recall Calder\'on's vector-valued complex interpolation method~\cite{Cal64}; see Chapter~4 of the monograph~\cite{BL76} for an extensive treatment. A pair of complex Banach spaces $(Y,\|\cdot\|_Y), (Z,\|\cdot\|_Z)$ is said to be compatible if they are both linearly embedded into a complex linear space $W$ with $Y+Z=W$. The space $W$ is a complex Banach space under the norm $\|w\|_W=\inf\{\|y\|_Y+\|z\|_Z:\ y+z=w\}$. Let $\mathcal{F}(Y,Z)$ denote the space of all bounded continuous functions $\psi:\{\zeta\in \C:\ 0\le \Re(\zeta)\le 1\}\to  W$ that are analytic on the open strip $\{\zeta\in \C:\ 0<\Re(\zeta)<1\}$. To every $\theta\in [0,1]$ one associates a Banach space $[Y,Z]_\theta$ as follows. The underlying vector space is $\{\psi(\theta):\ \psi\in \mathcal{F}(Y,Z)\}$, and the norm of $w\in [Y,Z]_\theta$ is given by $\|w\|_{[Y,Z]_\theta}=\inf_{\{\psi\in \mathcal{F}(Y,Z):\ \psi(\theta)=w\}}\max\{\sup_{t\in \R} \|\psi(ti)\|_{Y},\sup_{t\in \R}\|\psi(1+ti)\|_Z\}$. This turns $[Y,Z]_\theta$ into a Banach space, and we have $[Y,Z]_0=Y, [Y,Z]_1=Z$. Also, $[Y,Y]_\theta=Y$ for every $\theta\in [0,1]$.

Calder\'on's vector-valued version~\cite{Cal64} of the Riesz--Thorin theorem~\cite{Rie27,Tho48} asserts that if $(Y,\|\cdot\|_Y),(Z,\|\cdot\|_Z)$ and $(U,\|\cdot\|_U),(V,\|\cdot\|_V)$ are two compatible pairs of complex Banach spaces and $T:Y\cap Z\to U\cap V$ is a linear operator that extends to a bounded linear operator from $(Y,\|\cdot\|_Y)$ to $(U,\|\cdot\|_U)$ and from $(Z,\|\cdot\|_Z)$ to $(V,\|\cdot\|_V)$, then the following operator norm bounds hold true.
\begin{equation}\label{eq:riesz thorin}
\forall\, \theta\in [0,1],\qquad \|T\|_{[Y,Z]_\theta\to [U,V]_\theta}\le \|T\|_{Y\to U}^{1-\theta}\|T\|_{Z\to V}^\theta.
\end{equation}
The ensuing proof of Theorem~\ref{th:isomorph} uses the interpolation inequality~\eqref{eq:riesz thorin} four times (one of which is within the proof of a theorem that we shall quote from~\cite{Nao14}; see Theorem~\ref{thm:quote nao14} below). We shall now proceed to derive some preparatory estimates that will be needed in what follows.

 For every $p\ge 1$, every complex Banach space $(Z,\|\cdot\|_Z)$, and every weight $\omega:\{1,2\}\to [0,\infty)$ on the $2$-point set $\{1,2\}$, we denote (as usual) by $L_p(\omega;Z)$ the space $Z\times Z$ equipped with the norm that is given by setting $\|(a,b)\|_{L_p(\omega;Z)}^p=\omega(1)\|a\|_Z^p+\omega(2) \|b\|_Z^p$ for every $a,b\in Z$.

 If $(Y,\|\cdot\|_Y),(Z,\|\cdot\|_Z)$ is a compatible pair of complex Banach spaces then by Calder\'on's vector-valued version of Stein's interpolation theorem~\cite[Theorem~2]{Ste56} (see part(i) of $\S13.6$ in~\cite{Cal64} or Theorem~5.3.6 in~\cite{BL76}), for every $p,q\in [1,\infty]$, $\theta\in [0,1]$ and $\omega,\tau:\{1,2\}\to [0,\infty)$ we have
 \begin{equation}\label{eq:stein weiss}
 \left[L_p(\omega;Y),L_q(\tau;Z)\right]_\theta= L_{r}\Big(\omega^{\frac{1-\theta}{p}}\tau^{\frac{\theta }{q}};[Y,X]_\theta\Big),\quad \mathrm{where}\quad  r=\frac{pq}{\theta p+(1-\theta) q}.
 \end{equation}
 The  equality in~\eqref{eq:stein weiss} is in the sense of isometries, i.e., the norms on both sides coincide.

 Suppose that $p_1,p_2\in [1,2]$ and that the smoothness constants $\mathscr{S}_{p_1}(Y),\mathscr{S}_{p_2}(Z)$ are finite. Fix $S_1>\mathscr{S}_{p_1}(Y)$ and $S_2>\mathscr{S}_{p_2}(Z)$. Then by~\eqref{eq:def smoothness} we have
 \begin{equation}\label{eq:S1}
 \forall\, y_1,y_2\in Y,\qquad \|y_1+y_2\|_Y^{p_1}+\|y_1-y_2\|_Y^{p_1}\le 2\|y_1\|_Y^{p_1}+2S_1^{p_1}\|y_2\|_Y^{p_1},
 \end{equation}
 and
  \begin{equation}\label{eq:S2}
 \forall\, z_1,z_2\in Z,\qquad \|z_1+z_2\|_Z^{p_2}+\|z_1-z_2\|_Z^{p_2}\le 2\|z_1\|_Z^{p_2}+2S_2^{p_2}\|z_2\|_Z^{p_2}.
 \end{equation}
For every $S>0$ and $p\ge 1$ define $\omega(S,p):\{1,2\}\to (0,\infty)$ by $\omega(S,p)(1)=2$ and $\omega(S,p)(2)=2S^p$. Also, denote the constant function $\1_{\{1,2\}}$ by $\tau:\{1,2\}\to (0,\infty)$, i.e., $\tau(1)=\tau(2)=1$. With this notation, if we consider the linear operator $T:(Y+Z)\times (Y+Z)\to (Y+Z)\times (Y+Z)$ that is given by setting $T(w_1,w_2)=(w_1+w_2,w_1-w_2)$ for every $w_1,w_2\in Y+Z$, then
\begin{equation}\label{eq:p1 p2 norms}
\|T\|_{L_{p_1}(\omega(S_1,p_1);Y)\to L_{p_1}(\tau;Y)}\stackrel{\eqref{eq:S1}}{\le} 1\qquad\mathrm{and}\qquad \|T\|_{L_{p_2}(\omega(S_2,p_2);Z)\to L_{p_2}(\tau;Z)}\stackrel{\eqref{eq:S2}}{\le} 1.
\end{equation}
Denoting $r=p_1p_2/(\theta p_1+(1-\theta) p_2)$,  observe that $\omega(S_1,p_1)^{(1-\theta)/r}\omega(S_2,p_2)^{\theta/r}=\omega(S_1^{1-\theta}S_2^\theta,r)$. Hence, by~\eqref{eq:stein weiss} we have $[L_{p_1}(\omega(S_1,p_1);Y),L_{p_2}(\omega(S_2,p_2);Z)]_\theta=L_r(\omega(S_1^{1-\theta}S_2^\theta,r);[Y,Z]_\theta)$ and also $[L_{p_1}(\tau;Y);L_{p_2}(\tau;Z)]_\theta=L_r(\tau,[Y,Z]_\theta)$. In combination with~\eqref{eq:riesz thorin} and~\eqref{eq:p1 p2 norms}, these identities imply that the norm of $T$ as an operator from $L_r(\omega(S_1^{1-\theta}S_2^\theta,r);[Y,Z]_\theta)$ to $L_r(\tau,[Y,Z]_\theta)$ is at most $1$.  In other words, every $w_1,w_2\in [Y,Z]_\theta$ satisfy
$$
 \|w_1+w_2\|_{[Y,Z]_\theta}^{r}+\|w_1-w_2\|_{[Y,Z]_\theta}^{r}\le 2\|w_1\|_{[Y,Z]_\theta}^{r}+2\Big(S_1^{1-\theta}S_2^\theta\Big)^{r}\|w_2\|_{[Y,Z]_\theta}^{r}.
$$
Since $S_1$ and $S_2$ can be arbitrarily close to  $\mathscr{S}_{p_1}(Y)$ and $\mathscr{S}_{p_2}(Z)$, respectively,  we conclude that
\begin{equation}\label{eq:S interpolation}
\mathscr{S}_{\frac{p_1p_2}{\theta p_1+(1-\theta)p_2}}\big([Y,Z]_\theta\big)\le \mathscr{S}_{p_1}(Y)^{1-\theta} \mathscr{S}_{p_2}(Z)^\theta.
\end{equation}
By an analogous argument, if $q_1,q_2\ge 2$ and the convexity constants $\mathscr{K}_{q_1}(Y), \mathscr{K}_{q_2}(Z)$ are finite, then
\begin{equation}\label{eq:K interpolation}
\mathscr{K}_{\frac{q_1q_2}{\theta q_1+(1-\theta)q_2}}\big([Y,Z]_\theta\big)\le \mathscr{K}_{q_1}(Y)^{1-\theta} \mathscr{K}_{q_2}(Z)^\theta.
\end{equation}

\begin{remark} If one considers the traditional moduli of uniform convexity and smoothness (see e.g.~\cite{LT79} for the definitions), then interpolation statements that are analogous to~\eqref{eq:S interpolation}, \eqref{eq:K interpolation} are an old result of Cwikel and Reisner~\cite{CR82}, with the difference that~\cite{CR82} involves implicit constants that depend on $p_1,p_2,q_1,q_2$. By~\cite{BCL94}, this statement of~\cite{CR82} yields the estimates~\eqref{eq:S interpolation}, \eqref{eq:K interpolation} with additional factors in the right hand side that depend on  $p_1,p_2,q_1,q_2$. For our present purposes, i.e., for the proof of Theorem~\ref{th:isomorph}, it is important to obtain universal constants here. We believe that by carrying out the proofs in~\cite{CR82} with more care this could be achieved, but by working instead with the quantities $\mathscr{S}_p(\cdot), \mathscr{K}_q(\cdot)$ through the above simple (and standard) interpolation argument, we circumvented the need to do this and obtained the clean interpolation statements~\eqref{eq:S interpolation}, \eqref{eq:K interpolation}.
\end{remark}

Next, suppose that $(X,\|\cdot\|)$ is a Banach space over $\R$ with $\dd_X<\infty$. Fix $\dd>\dd_X$ and a Hilbertian norm $|\cdot|:X\to [0,\infty)$ that satisfies~\eqref{eq:d-isomorph}.   Denote by $H$ the Hilbert space that is induced by $|\cdot|$. Consider the complexifications $X_\C$ and $H_\C$. Observe that by~\eqref{eq:def complexification} and~\eqref{eq:d-isomorph} we have
\begin{equation}\label{eq:lift to complexification}
\forall\, x,y\in X,\qquad \|(x,y)\|_{H_\C}=\sqrt{|x|^2+|y^2|}\quad\mathrm{and}\quad \|(x,y)\|_{H_\C}\le \|(x,y)\|_{X_\C}\le \dd \|(x,y)\|_{H_\C}.
\end{equation}
Since $X_\C$ and $H_\C$ are isomorphic Banach space with the same underlying vector space (over $\C$), they are a compatible, and therefore for every $\theta\in [0,1]$ we can consider the complex interpolation space $[H_\C,X_\C]_{\theta}$. The formal identity operator $\mathsf{I}_{X\times X}:X\times X\to X\times X$ satisfies
\begin{equation}\label{eq:norm bounds}
\|\mathsf{I}_{X\times X}\|_{X_\C\to X_\C}\le 1,\quad \|\mathsf{I}_{X\times X}\|_{H_\C\to H_\C}\le 1,\quad \|\mathsf{I}_{X\times X}\|_{X_\C\to H_\C}\le 1,\quad \|\mathsf{I}_{X\times X}\|_{H_\C\to X_\C}\le \dd.
\end{equation}
The first two inequalities in~\eqref{eq:norm bounds} are tautological, and the final two inequalities in~\eqref{eq:norm bounds} are a consequence of the inequalities in~\eqref{eq:lift to complexification}. Hence,
$$
\|\mathsf{I}_{X\times X}\|_{[X_\C,H_\C]_\theta\to X_\C}=\|\mathsf{I}_{X\times X}\|_{[X_\C,H_\C]_\theta\to [X_\C,X_\C]_\theta}\stackrel{\eqref{eq:riesz thorin}}{\le} \|\mathsf{I}_{X\times X}\|_{X_\C\to X_\C}^{1-\theta}\|\mathsf{I}_{X\times X}\|_{H_\C\to X_\C}^{\theta}\stackrel{\eqref{eq:norm bounds}}{\le} \dd^\theta,
$$
and
$$
\|\mathsf{I}_{X\times X}\|_{ X_\C\to [X_\C,H_\C]_\theta}=\|\mathsf{I}_{X\times X}\|_{ [X_\C,X_\C]_\theta\to [X_\C,H_\C]_\theta}\stackrel{\eqref{eq:riesz thorin}}{\le} \|\mathsf{I}_{X\times X}\|_{X_\C\to X_\C}^{1-\theta}\|\mathsf{I}_{X\times X}\|_{X_\C\to H_\C}^{\theta}\stackrel{\eqref{eq:norm bounds}}{\le} 1.
$$
These two estimates can be restated as follows.
\begin{equation}\label{eq:distance to interpolant}
\forall\, x,y\in X,\qquad \|(x,y)\|_{[X_\C,H_\C]_\theta}\le \|(x,y)\|_{X_\C}\le \dd^\theta \|(x,y)\|_{[X_\C,H_\C]_\theta}.
\end{equation}

In what follows, we will use crucially the following theorem, which relates nonlinear spectral gaps to complex interpolation and uniform smoothness; this result appears in~\cite{Nao14} as Corollary~4.7.

\begin{theorem}\label{thm:quote nao14} Suppose that $(\mathcal{H},\|\cdot\|_{\mathcal{H}})$ and $(Z,\|\cdot\|_Z)$ are a compatible pair of complex Banach spaces, with  $(\mathcal{H},\|\cdot\|_{\mathcal{H}})$ being a Hilbert space. Suppose that $q\in [1,2]$ and $\theta\in (0,1]$. For every $n\in \N$ and every symmetric stochastic matrix $\A\in M_n(\R)$ we have
\begin{equation}\label{eq:XH}
\gamma\left(\A,\|\cdot\|_{[Z,\mathcal{H}]_\theta}^2\right)\lesssim \frac{\mathscr{S}_q\left([Z,\mathcal{H}]_\theta\right)^2}{\theta^{\frac{2}{q}}\left(1-\lambda_2(\A)\right)^{\frac{2}{q}}}.
\end{equation}
\end{theorem}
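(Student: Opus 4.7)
The plan is to combine three ingredients: (i) the sharp linear spectral estimate on the Hilbert endpoint $\mathcal{H}$; (ii) Ball's Markov type $q$ for $[Z,\mathcal{H}]_\theta$ with constant controlled by $\mathscr{S}_q([Z,\mathcal{H}]_\theta)$, obtained by iterating the defining smoothness inequality of $\mathscr{S}_q$ along a martingale-difference decomposition in the style of Naor--Peres--Schramm--Sheffield; and (iii) a Lafforgue--Pisier complex-interpolation trick that transports the Hilbert spectral decay into the interpolant at a rate controlled by $\theta$. Throughout, write $\pi$ for the uniform stationary distribution of $\A$ (well-defined since $\A$ is symmetric stochastic) and $\Pi$ for the $\pi$-averaging projection $(\Pi f)(i)=\sum_j \pi_j f(j)$.

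The first step is to pass the spectral decay of $\A^t-\Pi$ through complex interpolation. On $\ell_2^n(\pi;\mathcal{H})$ one has $\|\A^t-\Pi\|_{\mathrm{op}}\le \lambda_2(\A)^t$ by elementary linear algebra, while on $\ell_2^n(\pi;Z)$ the triangle inequality gives the trivial bound $\|\A^t-\Pi\|_{\mathrm{op}}\le 2$. Calder\'on's vector-valued Riesz--Thorin theorem~\eqref{eq:riesz thorin}, together with the Stein interpolation identity~\eqref{eq:stein weiss} for weighted $\ell_2^n$-spaces, then yields
\[
\big\|\A^t-\Pi\big\|_{\ell_2^n(\pi;[Z,\mathcal{H}]_\theta)\to\ell_2^n(\pi;[Z,\mathcal{H}]_\theta)}\le 2^{1-\theta}\lambda_2(\A)^{t\theta}.
\]
Choosing $t^{\ast}$ of order $1/\big(\theta(1-\lambda_2(\A))\big)$ makes the right-hand side a universal constant strictly below~$1$, so $\A^{t^\ast}$ is a strict $[Z,\mathcal{H}]_\theta$-contraction modulo~$\Pi$. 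Since $\A^{t^\ast}f-\Pi f=(\A^{t^\ast}-\Pi)(f-\Pi f)$, the triangle inequality then gives the \emph{mixing estimate} $\|f-\Pi f\|_{\ell_2^n(\pi;[Z,\mathcal{H}]_\theta)}\lesssim \|f-\A^{t^\ast}f\|_{\ell_2^n(\pi;[Z,\mathcal{H}]_\theta)}$, which reduces the problem to bounding a $t^{\ast}$-step random-walk displacement.

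Next, I would apply the NPSS martingale-difference iteration of the $\mathscr{S}_q$-smoothness inequality along the walk $W_0,\ldots,W_{t^{\ast}}$, obtaining the Markov-type-$q$ bound
\[
\sum_i\pi_i\,\mathbb{E}_i\big\|f(W_{t^\ast})-f(W_0)\big\|_{[Z,\mathcal{H}]_\theta}^q\lesssim \mathscr{S}_q\big([Z,\mathcal{H}]_\theta\big)^q\,t^{\ast}\sum_{i,j}\pi_i a_{ij}\|f(i)-f(j)\|_{[Z,\mathcal{H}]_\theta}^q.
\]
Combining this displacement bound with the mixing estimate from the previous paragraph and substituting $t^{\ast}\asymp 1/(\theta(1-\lambda_2(\A)))$ produces---after passing from the $q$-th to the second power of the norm---the desired conclusion
\[
\gamma\big(\A,\|\cdot\|_{[Z,\mathcal{H}]_\theta}^2\big)\lesssim \mathscr{S}_q\big([Z,\mathcal{H}]_\theta\big)^2\,(t^{\ast})^{2/q}\asymp \frac{\mathscr{S}_q([Z,\mathcal{H}]_\theta)^2}{\theta^{2/q}(1-\lambda_2(\A))^{2/q}}.
\]

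The main obstacle is exactly this last passage, where the $q$-th-power Markov-type inequality must be converted into the quadratic nonlinear-spectral-gap inequality~\eqref{eq:XH}. The Ball--Carlen--Lieb inequality naturally produces martingale estimates for $\|\cdot\|^q$, whereas the target is a $\|\cdot\|^2$-Poincar\'e-type inequality; and the conversion must introduce only truly universal constants (in particular, nothing that blows up as $q\to 1$), because Theorem~\ref{th:isomorph} is subsequently obtained by optimizing~\eqref{eq:XH} over $q\in[1,2]$ and pushing $q$ arbitrarily close to~$1$. Threading this needle is precisely what forces one to work throughout with the Ball--Carlen--Lieb parameters $\mathscr{S}_p(\cdot)$ rather than with classical moduli of smoothness, and to exploit the universal-constant interpolation bound~\eqref{eq:S interpolation} for $\mathscr{S}_p([Z,\mathcal{H}]_\theta)$ derived above.
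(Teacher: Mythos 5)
The paper does not prove Theorem~\ref{thm:quote nao14}: it is imported verbatim as~\cite[Corollary~4.7]{Nao14}, and the only ``proof'' the paper offers is a one-sentence description in Section~\ref{sec:overview} (Markov type via the martingale method of~\cite{NPSS06}, complex interpolation, and the Lafforgue trick used by Pisier in~\cite{Pis10}). Your outline is consistent with that description in its three ingredients, so you have identified the right toolbox; but you have also, correctly, flagged a genuine hole that you do not close, and that hole is precisely the substance of the result.

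Concretely, the gap you name in your final paragraph is real and is not a bookkeeping issue. The mixing estimate you derive lives in $\ell_2^n(\pi;[Z,\mathcal{H}]_\theta)$ (the Hilbert endpoint forces the $\ell_2$ structure on the index set), so you must bound the \emph{second} moment $\sum_i\pi_i\,\E_i\|f(W_{t^\ast})-f(W_0)\|^2$, whereas iterating the Ball--Carlen--Lieb $\mathscr{S}_q$-inequality along the Doob (and reversed-Doob) martingale only delivers a $q$-th-moment displacement bound. The passage from $q$-th moments to second moments is not a H\"older or power-mean step---those go the wrong way for $q\le 2$---and it cannot lose any factor that blows up as $q\to1$, since the downstream optimization in Theorem~\ref{thm:Sp version with distance to l2} sends $q\to1$. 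What is actually needed is a ``Markov type with exponent $2$ at rate $m^{2/q}$'' statement for $q$-smooth spaces, namely
\[
\sum_i\pi_i\,\E_i\big\|f(W_m)-f(W_0)\big\|^2\;\lesssim\;\mathscr{S}_q(W)^2\,m^{2/q}\sum_{i,j}\pi_i a_{ij}\|f(i)-f(j)\|^2,
\]
with a universal implied constant, and this does not follow formally from the $q$-th-power martingale inequality by the route you sketch. Establishing this (or, more precisely, the version of it that Naor proves) is the technical core of~\cite[Corollary~4.7]{Nao14}; your outline treats it as a conversion step rather than the main lemma.

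A secondary issue: you bound $\|\A^t-\Pi\|_{\ell_2^n(\pi;\mathcal{H})}$ by $\lambda_2(\A)^t$, but the operator norm of $\A^t-\Pi$ on the Hilbert endpoint equals $\max(|\lambda_2(\A)|,|\lambda_n(\A)|)^t$, which need not be small when $\lambda_n(\A)$ is close to $-1$. The theorem is stated in terms of $1-\lambda_2(\A)$ alone, so one must either pass to a lazy walk (replace $\A$ by $\tfrac12(\A+\mathsf{I})$, which changes constants) or, as is standard in this literature, work with the Ces\`aro averages $\frac1t\sum_{s=0}^{t-1}\A^s$ instead of powers of $\A$; the Ces\`aro averages converge to $\Pi$ at a rate governed by $1-\lambda_2(\A)$ regardless of the bottom of the spectrum. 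Your sketch as written does not address this, and with $\A^t$ alone the argument only yields a bound in terms of the absolute spectral gap.

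So: right ingredients, right informal narrative, but the step you yourself flag as ``the main obstacle'' is indeed the crux, and it is left open; and the spectral-gap-versus-absolute-gap point needs the Ces\`aro device. Since the paper itself does not reprove~\cite[Corollary~4.7]{Nao14}, the appropriate comparison is against that reference, and against it your proposal is an accurate high-level summary but not a proof.
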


We note in passing that in~\cite{Nao14} (specifically, in the statement of~\cite[Theorem~4.5]{Nao14}) there is the following misprint: \eqref{eq:XH} is stated there for the transposed interpolation space  $[\mathcal{H},X]_\theta$ rather than the correct space $[X,\mathcal{H}]_\theta$ as above. This misprint is not confusing when one reads~\cite{Nao14} in context rather the statement of~\cite[Theorem~4.5]{Nao14} in isolation (e.g., clearly~\eqref{eq:XH} should not deteriorate   as the interpolation space approaches the Hilbert space $\mathcal{H}$). Also, the proof itself in~\cite{Nao14} deals with the correct interpolation space $[X,\mathcal{H}]_\theta$ throughout (see equation~(4.14) in~\cite{Nao14}).

\subsubsection{Completion of the proof of Theorem~\ref{th:isomorph}} Since for every Banach space $(X,\|\cdot\|)$ we have $\mathscr{S}_1(X)=1$, Theorem~\ref{th:isomorph} is the special case $p=1$ of the following more refined theorem.

\begin{theorem}\label{thm:Sp version with distance to l2} Fix $p\in [1,2]$ and suppose that $(X,\|\cdot\|)$ is a Banach space satisfying $\dd_X<\infty$ and $\mathscr{S}_p(X)<\infty$. For every $n\in \N$ and every symmetric stochastic matrix $\A=(a_{ij})\in M_n(\R)$, we have
\begin{equation}\label{eq:optimized cases p}
\gamma\big(\A,\|\cdot\|^2\big)\lesssim\left\{\begin{array}{ll}\frac{\dd_X^2}{1-\lambda_2(\A)}&\mathrm{if}\ \ \dd_X^p\left(1-\lambda_2(\A)\right)^{1-\frac{p}{2}}\le e\mathscr{S}_p(X)^p,\\
\frac{\mathscr{S}_p(X)^2}{\left(1-\lambda_2(\A)\right)^{\frac{2}{p}}}
\left(\log\left(\frac{\dd_X^p\left(1-\lambda_2(\A)\right)^{1-\frac{p}{2}}}{\mathscr{S}_p(X)^p}\right)\right)^{\frac{2}{p}}&\mathrm{if}\ \ \dd_X^p\left(1-\lambda_2(\A)\right)^{1-\frac{p}{2}}\ge e\mathscr{S}_p(X)^p.\end{array}\right.
\end{equation}
\end{theorem}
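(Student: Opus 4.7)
The plan is to apply Theorem~\ref{thm:quote nao14} to the complex interpolation space $[X_\C, H_\C]_\theta$, where $H$ is a Hilbert space whose unit ball is (up to arbitrarily small slack) a distance ellipsoid of $X$, and then transfer the resulting nonlinear-spectral-gap estimate back to $X$ itself via the distortion bound~\eqref{eq:distance to interpolant}. The two cases in~\eqref{eq:optimized cases p} will correspond to two choices of the interpolation parameter: the endpoint $\theta = 1$ yields the first case, while a carefully tuned small $\theta$ yields the second.

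Concretely, I would fix $\dd > \dd_X$ and a Hilbertian norm $|\cdot|$ on $X$ satisfying~\eqref{eq:d-isomorph}, let $H$ be the resulting Hilbert space, and combine~\eqref{eq:distance to interpolant} with the definition of $\gamma$ and the identity $\gamma(\A, \|\cdot\|_X^2) = \gamma(\A, \|\cdot\|_{X_\C}^2)$ from Section~\ref{sec:complexification} to obtain
$$
\gamma(\A, \|\cdot\|^2) \le \dd^{2\theta} \, \gamma\!\left(\A, \|\cdot\|_{[X_\C, H_\C]_\theta}^2\right).
$$
I would then control the smoothness of $[X_\C, H_\C]_\theta$ via the interpolation inequality~\eqref{eq:S interpolation} with $p_1 = p$ and $p_2 = 2$, using $\mathscr{S}_2(H_\C) = 1$ and $\mathscr{S}_p(X_\C) \asymp \mathscr{S}_p(X)$, to obtain $\mathscr{S}_r\bigl([X_\C, H_\C]_\theta\bigr) \lesssim \mathscr{S}_p(X)^{1-\theta}$ with $1/r = (1-\theta)/p + \theta/2$. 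Feeding this into Theorem~\ref{thm:quote nao14} with $q = r$ and then into the displayed inequality above, and letting $\dd \to \dd_X$, produces the master estimate
\begin{equation}\label{eq:plan master}
\gamma(\A, \|\cdot\|^2) \lesssim \frac{\dd_X^{2\theta} \, \mathscr{S}_p(X)^{2(1-\theta)}}{\theta^{\alpha(\theta)} \, (1-\lambda_2(\A))^{\alpha(\theta)}}, \qquad \alpha(\theta) \eqdef \frac{2(1-\theta)}{p} + \theta.
\end{equation}

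The first case of~\eqref{eq:optimized cases p} is~\eqref{eq:plan master} evaluated at $\theta = 1$. For the second case, set $\Delta \eqdef \dd_X^p (1-\lambda_2(\A))^{1-p/2}/\mathscr{S}_p(X)^p$ and rewrite $\dd_X^{2\theta}$ in terms of $\Delta$; a short algebraic identity shows that the exponent of $1-\lambda_2(\A)$ in~\eqref{eq:plan master} then collapses exactly to $2/p$, leaving a bound of the form $\mathscr{S}_p(X)^2 \Delta^{2\theta/p}/(\theta^{\alpha(\theta)}(1-\lambda_2(\A))^{2/p})$ up to a factor of $\mathscr{S}_p(X)^{-2\theta}\le 1$. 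The natural choice $\theta = 1/\log\Delta$, which lies in $(0,1]$ precisely by the hypothesis $\Delta \ge e$, makes $\Delta^{2\theta/p} = e^{2/p}$ a universal constant and yields $\theta^{-\alpha(\theta)} \asymp (\log \Delta)^{2/p}$: the latter asymptotic holds because $\alpha(\theta) - 2/p = \theta(p-2)/p \in [-\theta, 0]$ for $p \in [1,2]$, so the two quantities differ by at most a factor of $\sup_{\theta \in (0,1]} \theta^{-\theta} = e^{1/e}$. This delivers the desired second bound.

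The principal obstacle is to carry the argument through while keeping every implicit constant universal, and in particular independent of $p$. The eventual application (Theorem~\ref{th:isomorph}) corresponds to $p = 1$, so any factor that deteriorated as $p \to 1$ would collapse the chain. This forces us to use the $q$-independent form of the estimate in Theorem~\ref{thm:quote nao14}, the universal-constant versions of the smoothness interpolation inequality~\eqref{eq:S interpolation}, and the $p$-uniform complexification equivalence $\mathscr{S}_p(X_\C) \asymp \mathscr{S}_p(X)$. These uniform versions are precisely what is collected in the paragraphs preceding Theorem~\ref{thm:quote nao14}; once they are in hand, the remaining optimization over $\theta$ is routine.
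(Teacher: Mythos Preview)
Your proposal is correct and follows essentially the same route as the paper: interpolate $X_\C$ with the (complexified) distance ellipsoid $H_\C$, bound $\mathscr{S}_r$ of the interpolant via~\eqref{eq:S interpolation}, feed this into Theorem~\ref{thm:quote nao14}, transfer back to $X$ via~\eqref{eq:distance to interpolant}, and then take $\theta=1$ or $\theta=1/\log\Delta$ according to the case split. The only cosmetic difference is that the paper simplifies $\theta^{\alpha(\theta)}\asymp\theta^{2/p}$ immediately (obtaining the master bound~\eqref{eq:interpolated with p smoothness}), whereas you defer this equivalence to the final step; also, your ``up to a factor of $\mathscr{S}_p(X)^{-2\theta}$'' is unnecessary, since the algebra cancels exactly to $\mathscr{S}_p(X)^2\Delta^{2\theta/p}/\bigl(\theta^{\alpha(\theta)}(1-\lambda_2(\A))^{2/p}\bigr)$.
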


\begin{proof}
 Fix $\dd>\dd_X$ and $\theta\in (0,1]$. Consider a Hilbertian norm $|\cdot|:X\to [0,\infty)$ that satisfies~\eqref{eq:d-isomorph} and denote by $H$ the Hilbert space that is induced by $|\cdot|$. As we explained in Section~\ref{sec:complexification}, the complexification $X_\C$ satisfies $\mathscr{S}_p(X_\C)\asymp  \mathscr{S}_p(X)$. Also, by the parallelogram identity, the complex Hilbert space $H_\C$ satisfies $\mathscr{S}_2(H_\C)=1$. Hence, by~\eqref{eq:S interpolation} with $Y=X_\C$, $Z=H_\C$, $p_1=p$ and $p_2=2$,
$$
\mathscr{S}_{\frac{2p}{\theta p+2(1-\theta)}}\left(\left[X_\C,H_\C\right]_\theta\right)\le \mathscr{S}_p(X_\C)^{1-\theta}\lesssim \mathscr{S}_p(X)^{1-\theta}.
$$
We may therefore apply Theorem~\ref{thm:quote nao14}  with $q=(2p)/(\theta p+2(1-\theta))$ to deduce that
\begin{equation}\label{eq:gamma on interpolant before passing back}
\gamma\left(\A,\|\cdot\|_{[X_\C,H_\C]_\theta}^2\right)\lesssim \frac{\mathscr{S}_p(X)^{2(1-\theta)}}{\theta^{\theta+\frac{2(1-\theta)}{p}}\left(1-\lambda_2(\A)\right)^{\theta+\frac{2(1-\theta)}{p}}}\asymp \frac{\mathscr{S}_p(X)^{2(1-\theta)}}{\theta^{\frac{2}{p}}\left(1-\lambda_2(\A)\right)^{\theta+\frac{2(1-\theta)}{p}}}.
\end{equation}
By the definition of $\gamma\left(\A,\|\cdot\|_{[X_\C,H_\C]_\theta}^2\right)$, for every $(x_1,y_1),\ldots,(x_n,y_n)\in X\times X$ we have
$$
\frac{1}{n^2}\sum_{i=1}^n\sum_{j=1}^n \|(x_i-x_j,y_i-y_j)\|_{[X_\C,H_\C]_\theta}^2\le \frac{\gamma\left(\A,\|\cdot\|_{[X_\C,H_\C]_\theta}^2\right)}{n}\sum_{i=1}^n\sum_{j=1}^n a_{ij}\|(x_i-x_j,y_i-y_j)\|_{[X_\C,H_\C]_\theta}^2.
$$
By~\eqref{eq:distance to interpolant}, this implies that
$$
\frac{1}{n^2}\sum_{i=1}^n\sum_{j=1}^n \|(x_i-x_j,y_i-y_j)\|_{X_\C}^2\le \frac{\dd^{2\theta}\gamma\left(\A,\|\cdot\|_{[X_\C,H_\C]_\theta}^2\right)}{n}\sum_{i=1}^n\sum_{j=1}^n a_{ij}\|(x_i-x_j,y_i-y_j)\|_{X_\C}^2.
$$
Due to~\eqref{eq:gamma on interpolant before passing back} and  because $X$ is isometric to a subspace of $X_\C$, this implies that
\begin{equation}\label{eq:interpolated with p smoothness}
\forall\, \theta\in (0,1],\qquad \gamma\big(\A,\|\cdot\|^2\big)\lesssim  \frac{\dd_X^{2\theta}\mathscr{S}_p(X)^{2(1-\theta)}}{\theta^{\frac{2}{p}}\left(1-\lambda_2(\A)\right)^{\theta+\frac{2(1-\theta)}{p}}}.
\end{equation}

If $\dd_X^p\left(1-\lambda_2(\A)\right)^{1-p/2}\le e\mathscr{S}_p(X)^p$, then by substituting $\theta=1$ into~\eqref{eq:interpolated with p smoothness} we obtain the first range of~\eqref{eq:optimized cases p}. When $\dd_X^p\left(1-\lambda_2(\A)\right)^{1-p/2}> e\mathscr{S}_p(X)^p$ the following value of $\theta$ minimizes the right hand side of~\eqref{eq:interpolated with p smoothness} and belongs to the interval $(0,1]$.
$$
\theta_{\mathrm{opt}}\eqdef\frac{1}{\log\left(\frac{\dd_X^p\left(1-\lambda_2(\A)\right)^{1-\frac{p}{2}}}{\mathscr{S}_p(X)^p}\right)}.
$$
A substitution of $\theta_{\mathrm{opt}}$ into~\eqref{eq:interpolated with p smoothness} yields an estimate that simplifies to give the second range of~\eqref{eq:optimized cases p}.
\end{proof}

\section{Worst-case to average-case logarithmic improvement of Euclidean distortion}\label{sec:average dist}

Thus far we used only the case $p=1$ of Theorem~\ref{thm:Sp version with distance to l2}. The purpose of this short section is to establish the following consequence of the case $p=2$ of  Theorem~\ref{thm:Sp version with distance to l2}.

\begin{theorem}\label{thm:worst to average} There is a universal constant $\beta>0$ with the following property.
Suppose that $D\ge 1$ and let $(X,\|\cdot\|_X)$ be a normed space that embeds with bi-Lipschitz distortion $D$ into a Hilbert space. Then every finite subset of $X$ embeds with average distortion $\beta\mathscr{S}_2(X)^3\log (2D)$ into a Hilbert space.
\end{theorem}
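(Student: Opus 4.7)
The plan is to apply the $p=2$ case of Theorem~\ref{thm:Sp version with distance to l2}. Since $X$ is a Banach space admitting a bi-Lipschitz embedding into a Hilbert space with distortion $D$, a standard linearization argument (for instance via an ultrapower of the target, which is again Hilbertian) upgrades this to a \emph{linear} embedding with the same distortion, so $\dd_X \le D$. Taking $p=2$ in Theorem~\ref{thm:Sp version with distance to l2} and using $\dd_X \le D$ yields that for every $n \in \N$ and every symmetric stochastic matrix $\A \in M_n(\R)$,
\[
\gamma\!\left(\A,\|\cdot\|_X^2\right) \lesssim \frac{\mathscr{S}_2(X)^2 \log(2D)}{1-\lambda_2(\A)}.
\]
This converts the ``worst-case'' hypothesis on $X$ into a nonlinear Poincar\'e-type inequality that holds for \emph{every} finite tuple of points in $X$, which is the input we will feed into an embedding construction.

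Fix a finite subset $\{x_1,\ldots,x_n\} \subset X$. To produce an embedding into a Hilbert space with small average distortion, the plan is to invoke the semidefinite-programming dual characterization of $L^2$-average Euclidean distortion, in the spirit of Linial--London--Rabinovich and Goemans: if no Hilbert embedding of $\{x_i\}$ achieves $L^2$-average distortion $K$, then by duality there exist nonnegative symmetric weights $(p_{ij})$ and $(q_{ij})$ whose graph Laplacians satisfy $\sum p_{ij}(v_i-v_j)^2 \le \sum q_{ij}(v_i-v_j)^2$ for every $v \in \R^n$, and such that $\sum p_{ij}\|x_i-x_j\|_X^2 > K^2 \sum q_{ij}\|x_i-x_j\|_X^2$. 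After normalizing $(q_{ij})$ to induce a symmetric stochastic matrix $\A$, the Laplacian comparison forces $1-\lambda_2(\A)$ to be bounded below by a constant determined by the ratio of row-sums of $p$ and $q$; substituting this $\A$ into the Poincar\'e inequality displayed above bounds the forbidden ratio, and optimizing yields an $L^2$-average Euclidean distortion bound for $\{x_i\}$ of order $\mathscr{S}_2(X)\sqrt{\log(2D)}$.

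The final step is to pass from this $L^2$-average bound (sums of squared distances) to the $L^1$-average distortion used in the theorem statement (sums of distances). The additional powers of $\mathscr{S}_2(X)$ needed to reach $\mathscr{S}_2(X)^3 \log(2D)$ come from Ball's Markov type $2$ theorem, $M_2(X) \lesssim \mathscr{S}_2(X)$, applied to a reversible Markov chain on $\{x_i\}$ built from $\A$: the Markov type inequality transfers the squared-distance Poincar\'e estimate in $X$ to a linear-in-distance comparison with Hilbert-valued chains, and carried out carefully it consumes exactly one further power of $\mathscr{S}_2(X)$, producing the asserted bound. The principal obstacle in executing the plan is precisely this $L^2$-to-$L^1$ conversion: for an abstract metric space the gap between $L^1$- and $L^2$-averages can be as large as $n$ or $\mathrm{diam}/\mathrm{avg}$, so it is essential to genuinely exploit the Banach-space structure of $X$ (through its Markov type $2$ constant) rather than treating $\{x_1,\ldots,x_n\}$ as an arbitrary metric space.
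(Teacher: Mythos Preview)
Your first two steps---obtaining $\dd_X\le D$ and then deducing the Poincar\'e inequality $\gamma(\A,\|\cdot\|_X^2)\lesssim \mathscr{S}_2(X)^2\log(2D)/(1-\lambda_2(\A))$ from the $p=2$ case of Theorem~\ref{thm:Sp version with distance to l2}---are correct and match the paper exactly. Your third step, the duality argument producing a $1$-Lipschitz map into $\ell_2$ with $L^2$-average distortion $O(\mathscr{S}_2(X)\sqrt{\log(2D)})$, is also in the right spirit: this is precisely the content of \cite[Corollary~1.4]{Nao14}, which the paper invokes as a black box rather than re-deriving via SDP duality. (Your sketch of the dual certificate is plausible but would need more care; in particular the claim that the Laplacian comparison forces a lower bound on $1-\lambda_2(\A)$ in terms of row-sum ratios is not obviously how the argument goes.)

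The gap is in your final step. You propose to pass from the $L^2$-average bound to the required $L^1$-average bound by ``applying Ball's Markov type~$2$ theorem to a reversible Markov chain on $\{x_i\}$ built from $\A$,'' claiming this ``transfers the squared-distance Poincar\'e estimate in $X$ to a linear-in-distance comparison with Hilbert-valued chains.'' This is not an argument: the Markov type inequality controls $\E\|Z_t-Z_0\|_X^2$ in terms of $t\,\E\|Z_1-Z_0\|_X^2$ for chains \emph{in $X$}, and there is no evident mechanism by which this converts an $\ell_2$-valued $L^2$-average embedding into an $\ell_2$-valued $L^1$-average embedding. The paper's route here is quite different: it invokes inequality~(7.39) of \cite{Nao14}, which shows that the $L^2$-average bound (valid for \emph{all} finite subsets of $X$) yields an $L^1$-average bound at the cost of the Lipschitz extension constant $\ee(X,\ell_2)$ and a squaring of the distortion, and then bounds $\ee(X,\ell_2)\lesssim\mathscr{S}_2(X)$ via \cite{NPSS06}. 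Markov type does enter, but only behind the scenes in the proof of the extension theorem---not through any chain built on the given point set. You are right that this conversion is the principal obstacle and that it genuinely requires the Banach-space structure of $X$; the missing ingredient is that the structure is exploited through Lipschitz extension, not through a direct Markov-chain computation.
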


A qualitative rephrasing of  Theorem~\ref{thm:worst to average} is the following somewhat curious assertion that is  nevertheless quite striking. If $X$ is $2$-convex, then the fact that every finite subset of $X$ admits an embedding into a Hilbert space with a worst-case pairwise requirement (i.e., bi-Lipschitz), automatically implies that every finite subset of $X$ admits an  embedding into a Hilbert space with an average-case  guarantee that grows at most like the logarithm  of the initial bi-Lipschitz distortion. It follows from the proof of~\cite[Lemma~1.12]{Nao14} that this phenomenon does not hold true under the weaker assumption $\mathscr{S}_p(X)<\infty$ for some $p\in [1,2)$. We do not know the extent to which the bound on the average distortion in Theorem~\ref{thm:worst to average} is sharp; it seems likely that the dependence on $\mathscr{S}_2(X)$ could be improved here. It is even conceivable that any finite subset of a Banach space $X$ with $\mathscr{S}_2(X)<\infty$ embeds with average distortion $\Psi(\mathscr{S}_2(X))$ into a Hilbert space, where $\Psi(\mathscr{S}_2(X))>0$ is a finite quantity that may depend only on $\mathscr{S}_2(X)$. We have no reason to conjecture that this is so, but if it were valid then it would be a remarkable geometric statement. We can show that this does hold true when $X$ is a $2$-convex Banach lattice, as explained in Remark~\ref{rem:lattice} below.

In the proof of Theorem~\ref{thm:worst to average}  we shall use the following standard notation related to the Lipschitz extension problem. The {\em Lipschitz extension constant} of a pair of metric spaces $(\cM,d_\cM)$ and $(\cN,d_\cN)$,  denoted $\ee(\cM,\cN)$, is the infimum over those $K\in [1,\infty]$ such that for every $\Omega\subset \cM$ and $L\in (0,\infty)$,  every $L$-Lipschitz function $\f:\Omega\to \cN$ admits a $(KL)$-Lipschitz extension $\Phi:\cM\to \cN$.

\begin{proof}[Proof of Theorem~\ref{thm:worst to average}] By a classical differentiation argument (see e.g.~\cite[Corollary~7.10]{BL00}), the fact that $X$ embeds with bi-Lipschitz distortion $D$ into a Hilbert space implies that $\dd_X\le D$. Since (by substituting $x=0$ into~\eqref{eq:def smoothness}) we always have $\mathscr{S}_2(X)\ge 1$, the conclusion of Theorem~\eqref{thm:Sp version with distance to l2} implies that for every $n\in \N$, every symmetric stochastic matrix $\A\in M_n(\R)$ satisfies
\begin{equation}\label{eq:for duality}
\gamma\big(\A,\|\cdot\|^2\big)\lesssim \frac{\mathscr{S}_2(X)^2\log(2D)}{1-\lambda_2(\A)}.
\end{equation}

By Corollary~1.4 in~\cite{Nao14}, the validity of~\eqref{eq:for duality} for {\em every} symmetric stochastic matrix $\A$ implies that for every $x_1,\ldots,x_n\in X$ there exists a $1$-Lipschitz mapping $f:\{x_1,\ldots,x_n\}\to \ell_2$ such that
\begin{equation}\label{eq:sqrt log}
\bigg(\sum_{i=1}^n\sum_{j=1}^n \|f(x_i)-f(x_j)\|_{2}^2\bigg)^{\frac12}\gtrsim \frac{1}{\mathscr{S}_2(X)\sqrt{\log (2D)} }\bigg(\sum_{i=1}^n\sum_{j=1}^n \|x_i-x_j\|_{X}^2\bigg)^{\frac12}.
\end{equation}
The estimate~\eqref{eq:sqrt log} is already an assertion that is similar to the conclusion of Theorem~\ref{thm:worst to average}, except that one is preserving the average of squares of distances rather than the average of the distances themselves. And, the conclusion in~\eqref{eq:sqrt log} is even stronger, with the distortion being at most a constant multiple of $\mathscr{S}_2(X)\sqrt{\log (2D)}$ rather than the claimed bound of $\mathscr{S}_2(X)^3\log (2D)$.

One can pass from~\eqref{eq:sqrt log} to the usual notion of average distortion by combining the results of Section~7.4 in~\cite{Nao14} with the Lipschitz extension extension theorem of~\cite{Bal92,NPSS06}. Specifically, by inequality~(7.39) in~\cite{Nao14} it follows from the validity of~\eqref{eq:sqrt log} for every $n\in \N$ and every $x_1,\ldots, x_n\in X$  that there also exists a $1$-Lipschitz mapping $\f:X\to \ell_2$ such that
\begin{equation*}\label{eq:use ext}
\sum_{i=1}^n\sum_{j=1}^n \|\f(x_i)-\f(x_j)\|_{2}\gtrsim \frac{1}{\ee(X,\ell_2)\mathscr{S}_2(X)^2\log(2D)}\sum_{i=1}^n\sum_{j=1}^n \|x_i-x_j\|_{X}.
\end{equation*}
Since $\ee(X,\ell_2)\lesssim \mathscr{S}_2(X)$ by~\cite[Theorem~2.3]{NPSS06}, this concludes the proof of Theorem~\ref{thm:worst to average}. \end{proof}

\begin{remark}\label{rem:lattice} Suppose that $(\mathcal{H},\|\cdot\|_{\mathcal{H}})$ and $(Z,\|\cdot\|_Z)$ are a compatible pair of complex Banach spaces, with  $(\mathcal{H},\|\cdot\|_{\mathcal{H}})$ being a Hilbert space. Suppose also that $\theta\in (0,1]$ and that  $\mathscr{S}_2([Z,\mathcal{H}]_\theta)<\infty$. Then by Theorem~\ref{thm:quote nao14} for every $n\in \N$, every symmetric stochastic matrix $\A\in M_n(\R)$ satisfies
$$
\gamma\left(\A,\|\cdot\|_{[Z,\mathcal{H}]_\theta}^2\right)\lesssim \frac{\mathscr{S}_2\left([Z,\mathcal{H}]_\theta\right)^2}{\theta\left(1-\lambda_2(\A)\right)}.
$$
By combining~\cite[Corollary~1.4]{Nao14} with~\cite[(7.39)]{Nao14} and~\cite[Theorem~2.3]{NPSS06}, it follows from this that any finite subset of $[Z,\mathcal{H}]_\theta$ embeds into $\ell_2$ with average distortion $O(\mathscr{S}_2\left([Z,\mathcal{H}]_\theta\right)^3/\theta)$.

Suppose next that $X$ is a Banach lattice that satisfies $\mathscr{S}_2(X)<\infty$. An extrapolation theorem of Pisier~\cite{Pis79-lattices-interpolation} asserts that there exists $\theta\in (0,1]$ and Banach spaces $Z,\mathcal{H}$ as above such that the complexification $X_\C$ is isomorphic to $[Z,\mathcal{H}]_\theta$. An inspection of Pisier's proof of this theorem (see also Appendix~I in~\cite{BL00}), reveals that  both $\theta$ and the isomorphism constants can be taken to be bounded by a function of $\mathscr{S}_2(X)$ alone (with more work it is also possible to deduce explicit estimates here, but this is quite tedious and we will not include the argument). It therefore follows from the above discussion  that any finite subset of a $2$-convex  Banach lattice $X$ embeds with average distortion $\Psi(\mathscr{S}_2(X))$ into $\ell_2$, where $\Psi(\mathscr{S}_2(X))>0$ is a finite quantity that depends only on $\mathscr{S}_2(X)$.
\end{remark}

\section{Further discussion and open problems}\label{sec:discussion}

Given $D\ge 1$ and a metric space $(\cM,d)$, the definition in~\cite{LLR95} of the metric dimension $\dim_D(\cM)$  can be naturally refined by restricting the potential target spaces into which we wish to embed $\cM$. Specifically, suppose that $\mathscr{F}$ is a family of finite-dimensional normed spaces. Define $\dim_D(\cM;\mathscr{F})$ to be the minimum $k\in \N$ for which $\cM$ embeds with distortion $D$ into some $X\in \mathscr{F}$ of dimension at most $k$; if no such $X\in \mathscr{F}$ exists then denote $\dim_D(\cM;\mathscr{F})=\infty$. The quantity $\dim_D(\cM)$ is then equal to $\dim_D(\cM;\mathscr{F})$ when $\mathscr{F}$ consists of all finite-dimensional normed spaces.

The question of estimating $\dim_D(\cM;\mathscr{F})$ for various metric spaces $\cM$ and various families $\mathscr{F}$ of finite-dimensional normed spaces encompasses much of the research on dimensionality reduction, though not all of the work on dimensionality reduction belongs to this framework (examples of other directions include restrictions on the embeddings themselves, such as dimensionality reduction via linear mappings~\cite{JN10}, requiring guarantees that are not necessarily bi-Lipschitz~\cite{IN07,ABN11} and complexity theoretic issues~\cite{MS10}, among others). Notable special cases include $\mathscr{F}=\{\ell_2^k\}_{k=1}^\infty$, which was studied in~\cite{Mat90-bilip-low-euc}, or the Johnson--Lindenstrauss lemma~\cite{JL84} which asserts that
\begin{equation}\label{eq:JL}
\forall\, \e>0,\ \forall\, n\in \N,\qquad \sup_{\substack{\cM\subset \ell_2\\ |\cM|\le n}}\dim_{1+\e}\big(\cM; \{\ell_2^k\}_{k=1}^\infty\big)\asymp_\e \log n.
\end{equation}
The implicit dependence on $\e$ in~\eqref{eq:JL} was unknown for many years, but it has been very recently determined in~\cite{LN16} (up to universal constant factors, and except for a small range of values of $\e$ that tends to $0$ as $n\to \infty$); see also the slightly weaker result of Alon~\cite{Alo03}, as well as the related work of Alon and Klartag~\cite{AK16}. These works include examples of methods to prove the impossibility of dimensionality reduction in the almost-isometric Euclidean setting, that differ from the analytic methods that are used here (they rely on linear algebra and certain coding arguments).

Another important example is when $\mathscr{F}=\{\ell_1^k\}_{k=1}^\infty$ and the metric space $\cM$ is  a subset of $\ell_1$. In this setting, the best-known bounds are that there exist universal constants $c,C>0$ for which
\begin{equation}\label{eq:BC}
\forall\, D\ge 1,\ \forall\, n\in \N,\qquad n^{\frac{c}{D^2}}\le \sup_{\substack{\cM\subset \ell_1\\ |\cM|\le n}}\dim_{D}\big(\cM; \{\ell_1^k\}_{k=1}^\infty\big)\le \frac{Cn}{D}.
\end{equation}
The first inequality in~\eqref{eq:BC} is a famous theorem of Brinkman and Charikar~\cite{BC05}, whose proof devised a clever method for proving dimensionality reduction lower bounds through the use of linear programming; see also~\cite{ACNN11}. A different approach to the same lower bound, which we already discussed in Section~\ref{sec:overview},  is due to~\cite{LN04-diamond}. See also the elegant entropy-based approach of Regev~\cite{Reg13} to the dimensionality reduction impossibility results of~\cite{BC05,ACNN11}. The upper bound in~\eqref{eq:BC} is due to the forthcoming work~\cite{ANN16}. Of course, the gap between the bounds in~\eqref{eq:BC} is large, and it would be of great interest to determine the correct asymptotic behavior here.

When $\mathscr{F}=\{\ell_p^k\}_{k=1}^\infty$ for some $p\notin \{1,2,\infty\}$, remarkably little is known about the asymptotic behavior of the  supremum of $\dim_D(\cM;\{\ell_p^k\}_{k=1}^\infty)$ over all $\cM\subset \ell_p$ with $|\cM|\le n$. It is a tantalizing longstanding open problem to devise methods to address this question, i.e., for proving  either  positive results or impossibility results for dimensionality reduction in $\ell_p$, when $p\notin \{1,2,\infty\}$.

In the setting of dimensionality reduction for subsets of $\ell_1$ when the target space can  be a general normed space, we suspect that  the lower bound on $\dim_D(\cdot)$ of~\cite{Mat96} (as well as the bound obtained here) cannot occur for subsets of $\ell_1$. Specifically, we have the following conjecture.

\begin{conjecture}\label{conj:polylog l1}
Every $n$-point subset of $\ell_1$ embeds with distortion $O(1)$ into some normed space of dimension $(\log n)^{O(1)}$.
\end{conjecture}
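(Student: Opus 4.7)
The plan is to combine the cut-cone representation of $\ell_1$-metrics with dimensionality reduction in $\ell_p$ for $p$ approaching $1$ at rate $1/\log n$. Fix an $n$-point subset $\cM\subset \ell_1$. By the cut-cone decomposition of $\ell_1$-metrics on $n$ points, the metric induced on $\cM$ embeds isometrically into $\ell_1^N$ for some $N\lesssim n^2$, so it suffices to prove the statement for an $n$-point subset of $\ell_1^N$ with $N$ polynomial in $n$.

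Next, set $p=1+1/\log N$ and consider the formal identity $\mathsf{I}:\ell_1^N\to \ell_p^N$. Since $\|\mathsf{I}\|_{\ell_1^N\to \ell_p^N}=1$ and $\|\mathsf{I}^{-1}\|_{\ell_p^N\to \ell_1^N}=N^{1-1/p}=e^{O(1)}$, passing to $\ell_p^N$ distorts $\cM$ by at most an $O(1)$ factor. The goal is then to exhibit a distortion-$O(1)$ embedding of the resulting $n$-point subset of $\ell_p^N$ into some normed space $Y$ of dimension $(\log n)^{O(1)}$; the eventual target normed space for $\cM$ is $Y$, which need not be an $\ell_p^k$ itself.

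For the compression step, I would attempt a $p$-stable random projection from $\ell_p^N$ onto $\ell_p^k$. For $p$ bounded away from $1$ this is classical and produces distortion $1+\varepsilon$ in target dimension $n^{\alpha(p)}$ for some $\alpha(p)>0$ that degenerates as $p\to 1$. To beat this degeneration, I would instead work inside a complex interpolation norm between $\ell_1^N$ and a suitable Hilbertian regularization --- exactly in the spirit of the proof of Theorem~\ref{th:isomorph} --- and perform a Johnson--Lindenstrauss-type projection inside the interpolant. Crucially, for polylog-dimensional targets one can tolerate a Banach--Mazur distance of $Y$ from Hilbert space as large as $(\log n)^{O(1)}$ without violating the desired conclusion, which opens room for a larger interpolation parameter than was affordable in the lower-bound direction pursued in Section~\ref{sec:proof main}.

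The main obstacle, and the reason Conjecture~\ref{conj:polylog l1} remains open, is the compression step: there is presently no dimensionality reduction for subsets of $\ell_p^N$ with $p=1+1/\log N$ that achieves polylogarithmic target dimension, because $p$-stable variables have infinite second moment and the standard Gaussian-style concentration arguments degenerate as $p\to 1$. A successful plan would therefore need either a refined truncation-and-conditioning analysis for $p$-stable projections in this near-$\ell_1$ regime, or a compression mechanism that genuinely exploits the fact that the subset originated from $\ell_1$ (for instance, one that is adapted to the sparse cut representation). Any such argument must sidestep the Brinkman--Charikar obstruction of~\eqref{eq:BC}, which rules out taking $Y$ to be some $\ell_1^k$; the flexibility granted by allowing general normed targets is precisely what makes the conjecture conceivable, but extracting a concrete construction from this flexibility is the crux of the problem.
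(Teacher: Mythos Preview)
This statement is a \emph{conjecture}, not a theorem: the paper does not prove it and explicitly presents it as open. The only remark the paper makes toward a proof is that the forthcoming work~\cite{ANN16} establishes the conjecture under a yet unproven geometric hypothesis, and that, in light of~\cite{KLMN05}, a plausible target might be $\ell_1^k\oplus\ell_\infty^k$ with $k=O(\log n)$. There is therefore no proof in the paper against which to compare your proposal.

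Your write-up is not a proof either, and you say so yourself: the ``compression step'' --- obtaining polylogarithmic-dimensional dimension reduction for subsets of $\ell_p^N$ with $p=1+1/\log N$, or some interpolant thereof --- is precisely the missing piece, and you correctly note that the Brinkman--Charikar obstruction rules out $\ell_1^k$ as the target. So what you have is an outline of one conceivable route (cut-cone reduction to $\ell_1^N$ with $N\lesssim n^2$, pass to nearby $\ell_p^N$, then attempt compression in an interpolated norm), together with an accurate identification of where it breaks down. That is a reasonable discussion of the problem, but it should not be labeled a proof; the conjecture remains open.
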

See also Problem~3.5 in~\cite{Mat03}, that was stated by Mendel, where it is speculated that one could even obtain a normed space of dimension $O(\log n)$ in Conjecture~\ref{conj:polylog l1}. Due to~\cite{KLMN05}, it is conceivable that one could take the target space to be $\ell_1^{k}\oplus \ell_\infty^{k}$ for some $k=O(\log n)$. In the forthcoming work~\cite{ANN16}, an approach is devised for proving Conjecture~\eqref{conj:polylog l1}. Namely, Conjecture~\eqref{conj:polylog l1}  is established in~\cite{ANN16} under a yet unproven but plausible geometric hypothesis.

We indicated   a small part of the extensive literature on dimensionality reduction, as well as a few of the basic questions that remain open. A comprehensive  survey would exceed the scope of the present article, so we only state that by combining restrictions on the metric $\cM$ (e.g.~doubling metric spaces, planar graphs, series-parallel graphs, graphs of bounded bandwidth, trees, ultrametrics) with restrictions of the targets $\mathscr{F}$, leads to a  rich body of work, as well as fundamental unsolved questions. To the best of our knowledge,  we indicated in this section and in the Introduction all of the known methods for proving lower bounds on dimensionality reduction, with the present article contributing another such method. There is great need to obtain new approaches to these issues.

\bibliographystyle{alphaabbrvprelim}
\bibliography{almost-ext}

 \end{document}